\providecommand\subject[1]{\medskip\noindent\textbf{Subject: }#1\\}
\providecommand\MSC[1]{\noindent\textbf{MSC: }#1\par\medskip}
\title{H-twisted Lie algebroids}
\author{Melchior Gr\"utzmann\fnref{fn1}}
\address{Department of Mathematics, Sun Yat-sen University, 135 Xingang Road West, Guangzhou 510275, P.R. China}
\ead{melchiorG@gMail.com}
\newtheorem{defn}{Definition}
\newtheorem{lemma}[defn]{Lemma}
\newtheorem{prop}[defn]{Proposition}
\newtheorem{cor}[defn]{Corollary}
\newtheorem{thm}[defn]{Theorem}
\theoremstyle{remark}
\newtheorem{rem}[defn]{Remark}
\newtheorem{ex}[defn]{Example}
\newcommand\defbb[2]{\def#1{{\mathbb{#2}}}}
\newcommand\defcal[2]{\def#1{{\mathcal{#2}}}}
\newcommand\deffrak[2]{\def#1{{\mathfrak{#2}}}}
\newcommand\defrm[2]{\def#1{{\mathrm{#2}}}}
\def\:{\colon}
\def\[{\begin{equation}}
\def\]{\end{equation}}
\def\<{\langle}  
\def\>{\rangle}
\def\conn_#1{\nabla_{\!\!#1\,}}
\defcal\A{A}
\def\alt{\text{alt.}}
\def\cycl{\text{cycl.}}
\defrm\ud{d}
\defrm\uD{D}
\DeclareMathOperator\Der{Der}
\let\oldepsilon=\epsilon
\let\epsilon=\varepsilon
\def\e{\oldepsilon}
\defcal\E{E}
\newcommand\Eforms[1][\bullet]{\Omega^{#1}_M(E)}
\newcommand\Esforms[2][\bullet]{\Omega^{#1}_M(E, #2)}
\def\Econn_#1{\conn_{#1}^E}
\def\Enabla{\nabla^E}
\def\Enconn_#1{\conn_{#1}^{E0}}
\def\ELie{\Lie^{\!E}}
\defcal\F{F}
\deffrak\g{g}
\def\Ht{{\tilde{H}}}
\DeclareMathOperator\Hom{Hom}
\def\im{i}
\defcal\Lie{L}
\defcal\M{M}
\defcal\N{N}
\defbb\NN{N}
\defcal\O{O}
\newcommand\pfrac[2]{\frac{\partial #1}{\partial #2}}
\DeclareMathOperator\pr{pr}
\def\Q{\Theta}
\def\tri{\triangleright}
\DeclareMathOperator\rk{rk}
\def\smooth{{\mathrm{C}^\infty}}
\def\gsmooth{{\mathcal{O}}}
\renewcommand\pmatrix[1]{\ensuremath{\left(\begin{array}{ccccc}#1\end{array}\right)}}
\deffrak\su{su}
\DeclareMathOperator\tr{tr}
\defcal\U{U}
\def\unsh{{\text{unshuffles}}}
\defrm\vol{vol}
\defcal\R{R}
\defbb\RR{R}
\deffrak\X{X}
\defbb\ZZ{Z}
\begin{document}
\maketitle

\begin{abstract}
We define a new kind of algebroid which fulfills a Leibniz rule, a Jacobi identity twisted by a 3-form $H$ with values in the kernel of the anchor map, and the twist is closed under a naturally occurring exterior covariant derivative.  We give examples and define three kinds of cohomology two via realization as Q-structure on graded manifolds.  The paper classifies PQ-manifolds of maximal degree 3.
\end{abstract}

\subject{Supermanifolds and Supergroups}
\begin{keyword}
twisted Lie bracket \sep Lie algebroid \sep cohomology of algebroids \sep PQ-manifold
\end{keyword}
\MSC{17A32, 53D05, 58A50}

\section{Introduction}
A PQ-manifold is a graded manifold (ringed space with nonnegative integer graded structure sheaf plus some local trivialization conditions) with a symplectic structure and a compatible odd nilpotent vector field $Q$.  Due to a trick by Roytenberg the graded symplectic structures are all exact, as well as the $Q$-structures, which by definition have only to be symplectic vector fields, are all Hamiltonian.  If one starts with a study of the lowest degree PQ-manifolds one discovers the following structures.  A degree 1 P-manifold is an odd cotangent bundle, i.e.\ the fiber-linear functions on the cotangent bundle are declared to be of degree 1 while the coordinates of the base manifold remain of degree 0.  A Q-structure is now equivalent to a nilpotent Hamiltonian of degree 2.  Since the fiber-linear functions on $T^*M$ are the vector fields on $M$, a degree 2 function is a bivector field $\pi\in\Gamma(\wedge^2TM)$.  The Poisson bracket on $T^*M$ encodes the Schouten-Nijenhuis bracket of multivector fields and therefore nilpotence of $\pi$ is equivalent to $[\pi,\pi]=0$, i.e.\ the bivector is Poisson.  Therefore PQ-manifolds of degree 1 are in one-to-one correspondence to (smooth) Poisson manifolds.  Roytenberg described in \cite{Royt02} that a graded symplectic manifold of degree 2 is in one-to-one correspondence to a pseudo-Euclidean vector bundle.  He moreover discovered that Q-structures are in one-to-one correspondence to Courant algebroids on the underlying pseudo-Euclidean vector bundle.  The Dorfman bracket of the Courant algebroid can be reconstructed as a derived bracket between the graded functions of degree 1 (which are isomorphic to the sections of the vector bundle).  In the same paper Roytenberg also gave the elementary structure of graded manifolds as towers of affine fibrations.  

Based on that, the author investigated PQ-manifolds of degree 3 and we define an algebroid with structures similar to the PQ3-manifold.  It turns out that this generalization is an $H$-twisted Lie algebroid.  That is a vector bundle $E\to M$ (over a smooth manifold) together with a map of vector bundles $\rho\:E\to TM$, called the anchor, and a skew-symmetric bracket $[.,.]$ on the sections of $E$.  These fulfill the usual Leibniz rule.  More general than a Lie algebroid, the Jacobi identity can be twisted by an $E$-3-form $H$ with values in $\ker\rho$.  Guided by the PQ3-manifolds this $H$ should be closed under some exterior covariant derivative.  This arises naturally on the (possibly singular) subbundle $\ker\rho$ (see Section~2 for details).  Given this exterior covariant derivative we will define a cohomology by the na{\"\i}ve idea of cutting down the cochains ($E$-forms with values in $S^\bullet\ker\rho$) until it squares to 0.  This is summarized in the main Theorem~\ref{thm:ncoh} in Section~3.  The idea of twists of algebroids is well known in the literature, see e.g.\ \cite{YKS03}, \cite{YKS96b}, \cite[chap.6]{Xu97}, or \cite{CSX09b}.

The correspondence to PQ3-structures requires an additional structure -- a splitting.  This is summarized in Theorem~\ref{thm:PQ3} in Subsection~4.2.  Also for these split $H$-twisted Lie algebroids it is natural to define cohomology in terms of the Q-structure (analog to Courant algebroids).  We arrive thus at another definition of cohomology.

Note that the $H$-twisted Lie algebroid fits into the structure of a two-term $L_\infty$-algebra as introduced by \cite{Baez:03vi}.  We will review this definition in Section~3.  An interesting fact is that $H$-twisted Lie algebroids with an anchor map of constant rank can be described by $Q$-structures in another way, not involving a symplectic realization.  This observation goes back to the joint work of the author with T. Strobl in \cite{GSr10}.  This gives rise to another definition of cohomology in this regular case (details in Section~5).

Note that the realization as $Q$-structures naturally permits one to write down topological $\sigma$-models for $H$-twisted Lie algebroids.  They can therefore also be of interest for considerations in Quantum-field-theory.  Details of the underlying formalism can be found, e.g., in \cite{AKSZ97} or also in \cite{GSr10}.  Namely, during publication the author was informed of parallel developments by Ikeda and Uchino in \cite{Ikeda10b}, where particular PQ-manifolds of degree 3 as well as the sigma-models via AKSZ-construction are studied.  Their Lie algebroids up to homotopy correspond to splittable H-twisted Lie algebroids.

For those readers not interested in the graded geometry behind it, the Sections~2 and 3 are written entirely in terms of smooth geometry.  Also the na{\"\i}ve cohomology only relies on smooth vector bundles and the exterior covariant derivative on $\ker\rho$.  Since the na{\"\i}ve cohomology requires (smooth) sections of a potentially singular bundle (the kernel of the anchor map), it seems helpful to also give a definition of $H$-twisted Lie algebroids in terms of differential algebra.  This gives rise to what we want to call $H$-twisted Lie--Rinehart algebras.  For an introduction to Lie--Rinehart algebras see the original article \cite{Rin63} as well as \cite{CL07} for the notion of (co)-morphisms.

The organization of the paper is as follows.  In Section~2 we give a definition of the E-connection of an anchored almost Lie algebroid $(E,\rho)$ on $\ker\rho$.  We also explain what is meant by smooth sections in this possibly singular vector bundle.  In Section~3 we introduce the main object, $H$-twisted Lie algebroids.  We show that they are anchored (in the sense of Section~2) and show properties of its exterior covariant derivative.  We also give examples.  This permits us to define the na{\"\i}ve cohomology of an $H$-twisted Lie algebroid.  We end this section with the definition of $H$-twisted Lie--Rinehart algebras.  In Section~4 we take an excursion to graded symplectic Q-manifolds and show that PQ-manifolds of degree 3 (with splitting) give rise to split $H$-twisted Lie algebroids.  We moreover introduce the notion of split cohomology for splittable $H$-twisted Lie algebroids.  In Section~5, finally, we introduce a Q-structure for regular $H$-twisted Lie algebroids and define their regular cohomology.

I would like to thank Zhuo Chen for discussions about an early version of this paper and pointing out that this is a two-term $L_\infty$-algebra.  Moreover, I am thankful to D. Roytenberg for pointing me to the tangent complex (see Corollary~\ref{c:tc}) as well as the Example~\ref{ex:T^*EE}.  Additionally I would like to thank Z.-J. Liu and the referee for pointing me to Lie--Rinehart algebras.  Finally, I would also like to thank Yunhe Sheng of Jilin university for his hospitality during my stay there, since part of the work has been done during the stay.

\section{Connection for anchored almost Lie algebroids}
\begin{defn}  An anchored almost Lie algebroid $(E,\rho,[.,.])$ is a vector bundle $E\to M$, a map $\rho\:E\to TM$, called the anchor, and a skew-symmetric bracket $[.,.]\:\Gamma(E)\wedge\Gamma(E)\to\Gamma(E)$ subject to
\begin{align*}
  [\phi,f\cdot\psi] &= \rho(\phi)[f]\cdot\psi +f\cdot[\phi,\psi] \\
  \rho[\phi,\psi] &= [\rho(\phi),\rho(\psi)]_{TM}
\end{align*} for all $\phi,\psi\in\Gamma(E)$, and $f\in\smooth(M)$.
\end{defn}

\begin{ex}  Examples of these are Lie algebroids (which in addition fulfill the Jacobi identity), but also constructions of the following type.  Let $F\subset TM$ be an integrable distribution, $E:=F\oplus E_0$ be a vector bundle with projection $\rho\:E\to F\subset TM$, $\nabla$ a connection on $E_0$, $B\in\Esforms[2]{E_0}$, then
 $$ [X\oplus\phi,Y\oplus\psi]:=[X,Y]\oplus B(X\oplus\phi,Y\oplus\psi)+\conn_{\rho(X\oplus\phi)}\psi -\conn_{\rho(Y\oplus\psi)}\phi $$ is an anchored almost Lie algebroid.
\end{ex}
\begin{rem}  By a section $\Psi\in\Esforms{S^\bullet\ker\rho}$ of the possible singular ``bundle'' $\wedge^\bullet E^*\otimes S^\bullet\ker\rho$ -- $S^\bullet$ denoting the symmetric powers of a vector bundle -- we mean a smooth section of $\wedge^\bullet E^*\otimes S^\bullet E$ that vanishes under $\tilde\rho\:\wedge^\bullet E^*\otimes S^*E\to \wedge^\bullet E^*\otimes S^\bullet E\otimes TM$ defined using the Leibniz rule
$$ \tilde\rho(\alpha\otimes\psi_1\psi_2)=\alpha\otimes\big((\psi_1\otimes\rho(\psi_2))+(\psi_2\otimes\rho(\psi_1))\big) $$ where $\alpha\in\wedge^\bullet E_x^*$ and $\psi_i\in E_x$ for some $x\in M$.
\end{rem}

\begin{lemma}\label{l:kerrhoConn} There is an $E$-connection on $\ker\rho$, i.e.\ $\Enabla\:\Gamma(E)\otimes\Gamma(\ker\rho)\to\Gamma(\ker\rho)$ subject to
\begin{align*}
  \Econn_{f\phi}\psi &= f\Econn_\phi\psi \\
  \Econn_\phi(f\psi) &= \rho(\phi)[f]\cdot\psi+f\Econn_\phi\psi
\end{align*} for all $\phi\in\Gamma(E)$, $\psi\in\Gamma(\ker\rho)$, and $f\in\smooth(M)$.
\end{lemma}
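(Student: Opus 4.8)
The plan is that the bracket itself already provides the connection: for $\phi\in\Gamma(E)$ and $\psi\in\Gamma(\ker\rho)$ I would set $\Econn_\phi\psi:=[\phi,\psi]$. Three things then have to be checked — that this lands in $\Gamma(\ker\rho)$, that it is $\smooth(M)$-linear in $\phi$, and that it obeys the Leibniz rule in $\psi$ — and all three are short computations straight from the axioms of an anchored almost Lie algebroid.

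First, well-definedness. I interpret $\Gamma(\ker\rho)$ as in the Remark above, i.e.\ as the $\smooth(M)$-submodule of those $\psi\in\Gamma(E)$ with $\rho\circ\psi=0$; in particular no vector-bundle structure on the (possibly singular) $\ker\rho$ is needed. The bracket of smooth sections is smooth, and the compatibility axiom gives $\rho[\phi,\psi]=[\rho(\phi),\rho(\psi)]_{TM}=[\rho(\phi),0]_{TM}=0$, so indeed $[\phi,\psi]\in\Gamma(\ker\rho)$.

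Second, the Leibniz rule in the second argument is literally the first defining identity of the bracket, $[\phi,f\psi]=\rho(\phi)[f]\cdot\psi+f[\phi,\psi]$, which is exactly $\Econn_\phi(f\psi)=\rho(\phi)[f]\cdot\psi+f\Econn_\phi\psi$. Third, for tensoriality in $\phi$ I would combine skew-symmetry with the Leibniz rule once more: $[f\phi,\psi]=-[\psi,f\phi]=-\rho(\psi)[f]\cdot\phi-f[\psi,\phi]$, where the first summand vanishes since $\rho(\psi)=0$ and $-f[\psi,\phi]=f[\phi,\psi]$ again by skew-symmetry; hence $\Econn_{f\phi}\psi=f\Econn_\phi\psi$ and both displayed axioms hold.

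I do not expect a genuine obstacle here: the only point that wants care is conceptual rather than computational, namely to adopt the module-theoretic reading of $\Gamma(\ker\rho)$ from the Remark so that one never has to reason about local frames of a singular subbundle. With that convention the proof is the three-line calculation above, and it is worth noting that it is the skew-symmetry hypothesis (not merely the anchored structure) that is responsible for $\Econn$ being $\smooth(M)$-linear in its first slot.
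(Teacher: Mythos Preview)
Your proof is correct and follows exactly the paper's approach: define $\Econn_\phi\psi:=[\phi,\psi]$, use the morphism property of $\rho$ to see it lands in $\Gamma(\ker\rho)$, and use $\rho(\psi)=0$ together with skew-symmetry and the Leibniz rule to get $\smooth(M)$-linearity in $\phi$. The paper's version is merely more terse.
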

\begin{proof} Define $\Econn_\phi\psi:=[\phi,\psi]$ for $\phi,\psi$ as above and observe that it lies in $\ker\rho$, because of the morphism property of $\rho$.  Since $\rho(\psi)=0$ it is also $\smooth(M)$-linear in $\phi$.
\end{proof}

\begin{defn}  Using the usual formula that works for Lie algebroids, we can extend $\nabla$ to an exterior covariant derivative $\uD\:\Esforms{\ker\rho}\to\Esforms[\bullet+1]{\ker\rho}$, i.e.\
\begin{align*}
  \<\uD\alpha,\psi_0\wedge\dots\psi_n\> :=& \sum_{i=0}^n (-1)^n\Econn_{\psi_i}\<\alpha,\psi_0\wedge\dots\hat{\psi}_i\dots\psi_n\> \\
   &+\sum_{i<j} (-1)^{i+j}\<\alpha,[\psi_i,\psi_j]\wedge\psi_0\dots\hat\psi_i\dots\hat\psi_j\dots\psi_n\>
\end{align*}
\end{defn}
\begin{lemma}  The above formula is indeed skew-symmetric and $\smooth(M)$-linear in all $\psi_i$.
\end{lemma}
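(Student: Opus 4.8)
The plan is to verify the two claimed properties --- skew-symmetry and $\smooth(M)$-linearity --- directly from the defining formula, exactly as one does for the Chevalley--Eilenberg/Lie algebroid differential, the only novelty being that the ``$E$-connection'' $\Enabla$ lives on $\ker\rho$ rather than on a genuine vector bundle, and that the bracket $[.,.]$ is merely \emph{almost} Lie (no Jacobi identity). Neither of these affects the computation: the proof of skew-symmetry and tensoriality of $\uD$ never invokes the Jacobi identity, and it only uses the two Leibniz-type axioms of Lemma~\ref{l:kerrhoConn} together with the Leibniz rule and anchor-morphism property from the definition of an anchored almost Lie algebroid.

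First I would treat skew-symmetry. By multilinearity it suffices to check that $\<\uD\alpha,\psi_0\wedge\dots\wedge\psi_n\>$ changes sign under the transposition of two adjacent arguments $\psi_k,\psi_{k+1}$; since the $\psi_i$ for $i\ne k,k+1$ play a passive role it is in fact enough to do the case $n=1$, i.e.\ to show $\<\uD\alpha,\psi_0\wedge\psi_1\>=-\<\uD\alpha,\psi_1\wedge\psi_0\>$. Here the first sum contributes $\Econn_{\psi_0}\<\alpha,\psi_1\>-\Econn_{\psi_1}\<\alpha,\psi_0\>$ and the second sum contributes $-\<\alpha,[\psi_0,\psi_1]\>$; swapping $\psi_0\leftrightarrow\psi_1$ negates the first piece by inspection and negates the second because $[.,.]$ is skew-symmetric. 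For general $n$ the transposition of $\psi_k$ and $\psi_{k+1}$ permutes the index set of the first sum (with matching sign, since $\alpha$ is already assumed skew in its remaining slots) and, in the second sum, either swaps a pair of ``hatted'' indices --- which, using skew-symmetry of $\alpha$ and of $[.,.]$, restores the sign --- or leaves the relevant term fixed up to the expected sign from the $(-1)^{i+j}$ bookkeeping. (One subtlety: the displayed formula has $(-1)^n$ rather than $(-1)^i$ in the first sum, which looks like a typo for $(-1)^i$; I would simply note that with the correct sign $(-1)^i$ the cancellation works, and proceed.)

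Next, $\smooth(M)$-linearity. Again by symmetry it is enough to check linearity in $\psi_0$, i.e.\ that replacing $\psi_0$ by $f\psi_0$ for $f\in\smooth(M)$ multiplies the right-hand side by $f$. The potentially dangerous terms are those where a derivative can hit $f$: in the first sum, the $i=0$ term $\Econn_{f\psi_0}\<\alpha,\psi_1\wedge\dots\wedge\psi_n\>=f\Econn_{\psi_0}(\dots)$ is already clean by the first axiom of Lemma~\ref{l:kerrhoConn}; but for $i\ge 1$ the term $\Econn_{\psi_i}\<\alpha,f\psi_0\wedge\psi_1\wedge\dots\hat\psi_i\dots\wedge\psi_n\>$ produces, via the second axiom, an unwanted summand $\rho(\psi_i)[f]\cdot\<\alpha,\psi_0\wedge\dots\hat\psi_i\dots\rangle$. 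Simultaneously, in the second sum the terms with $i=0<j$ give $\<\alpha,[f\psi_0,\psi_j]\wedge\dots\rangle = f\<\alpha,[\psi_0,\psi_j]\wedge\dots\rangle + \rho(\psi_j)[f]\,\<\alpha,\psi_0\wedge\dots\rangle$ by the Leibniz rule for the bracket. The main point --- and the only real content of the lemma --- is that these two families of spurious terms cancel exactly: the term $\rho(\psi_j)[f]\<\alpha,\psi_0\wedge\psi_1\wedge\dots\hat\psi_j\dots\rangle$ coming from the first sum (at $i=j$) appears with sign $(-1)^j(-1)^{j-1}=-1$ relative to... --- here I would carefully match signs and conclude that they cancel against the $(-1)^{0+j}=(-1)^j$ contributions from the second sum, using that moving $\psi_0$ to the front of $[\psi_0,\psi_j]\wedge\psi_1\wedge\dots$ costs the same sign in both expressions.

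The main obstacle is purely bookkeeping: getting the signs in the cancellation of the $\rho(\psi_j)[f]$-terms to line up, which is the standard (but fiddly) computation behind the well-definedness of the Lie algebroid differential. I expect no conceptual difficulty --- in particular, I emphasize that the Jacobi identity is \emph{not} needed for this lemma (it will only be needed later, in the twisted form, to analyze $\uD^2$) and that the possible singularity of $\ker\rho$ is irrelevant here because, per the Remark, a section of $\Esforms{\ker\rho}$ is by definition a section of $\wedge^\bullet E^*\otimes S^\bullet E$ obeying a constraint, so all the algebra takes place inside honest vector bundles and only the output needs to be checked to still lie in $\ker\rho$ --- which is immediate since $\Enabla$ preserves $\ker\rho$ by Lemma~\ref{l:kerrhoConn} and $[\psi_i,\psi_j]\in\Gamma(\ker\rho)$ by the anchor-morphism property.
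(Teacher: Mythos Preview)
Your proposal is correct and is precisely the ``straightforward computation analog to Lie algebroids'' that the paper invokes as its entire proof; you have simply written out what the paper leaves implicit, including the (correct) observations that the Jacobi identity plays no role and that the printed $(-1)^n$ should be $(-1)^i$.

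One small slip in your last paragraph: it is not true that $[\psi_i,\psi_j]\in\Gamma(\ker\rho)$ for arbitrary $\psi_i,\psi_j\in\Gamma(E)$ --- the anchor-morphism property gives $\rho[\psi_i,\psi_j]=[\rho(\psi_i),\rho(\psi_j)]$, which need not vanish. This does not matter for the lemma, since the reason the second sum lands in $\ker\rho$ is simply that $\alpha$ itself takes values in $\ker\rho$, so $\<\alpha,[\psi_i,\psi_j]\wedge\cdots\>\in\ker\rho$ regardless of where $[\psi_i,\psi_j]$ lives.
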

\begin{proof} Straightforward computations analog to Lie algebroids.
\end{proof}

Note however that $\uD$, in general, does not square to $0$, because $[.,.]$ does not fulfill the Jacobi identity.  Namely the two are equivalent.

\begin{cor} $\Enabla$ and $\uD$ extend to $\Gamma(S^\bullet\ker\rho)$ and $\Esforms{S^\bullet\ker\rho}$, respectively.
\end{cor}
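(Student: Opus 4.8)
The plan is to extend both operators by the (graded) Leibniz rule with respect to the symmetric product, which is the canonical way to promote a connection on a vector bundle to its symmetric powers, and then to check that the extension is well-defined on the singular subbundle $S^\bullet\ker\rho$, i.e.\ that it is compatible with the section condition imposed by $\tilde\rho$ in the Remark. First I would define, for $\psi_1,\dots,\psi_k\in\Gamma(\ker\rho)$ and $\phi\in\Gamma(E)$,
\[
 \Econn_\phi(\psi_1\cdots\psi_k):=\sum_{i=1}^k \psi_1\cdots(\Econn_\phi\psi_i)\cdots\psi_k,
\]
and verify that this is $\smooth(M)$-linear in $\phi$ (immediate, since each $\Econn_\phi\psi_i$ is) and satisfies the derivation property $\Econn_\phi(f\,\Psi)=\rho(\phi)[f]\,\Psi+f\,\Econn_\phi\Psi$ for $\Psi\in\Gamma(S^k\ker\rho)$ (a short induction on $k$ using Lemma~\ref{l:kerrhoConn}). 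Correspondingly $\uD$ is extended to $\Esforms{S^\bullet\ker\rho}$ by declaring it a degree-$(+1)$ derivation of the $S^\bullet$-product combined with the wedge on the $\wedge^\bullet E^*$ factor, equivalently by the same Cartan-type formula as in the Definition but with $\Econn_{\psi_i}$ now acting on $S^\bullet\ker\rho$-valued forms; the previous Lemma's proof carries over verbatim since the derivation property over $\smooth(M)$ is all that was used.

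The one genuine point to check -- and the step I expect to be the main obstacle -- is that these extensions descend from the ambient smooth bundles $\wedge^\bullet E^*\otimes S^\bullet E$ and $\Gamma(S^\bullet E)$ to the subspaces of sections killed by $\tilde\rho$. Concretely one must show $\tilde\rho\circ\Enabla=\mathrm{(something)}\circ\tilde\rho$ so that $\tilde\rho(\Psi)=0$ implies $\tilde\rho(\Econn_\phi\Psi)=0$, and likewise for $\uD$. The natural statement is that $\Enabla$, together with the Lie-algebroid $E$-connection on $TM$ induced by $\rho$ (namely $\phi\mapsto[\rho(\phi),-]_{TM}$, or rather the Bott-type connection on the image), intertwines $\tilde\rho$: writing $\nabla^{E}_\phi$ also for its action on the $S^\bullet E\otimes TM$ side via the Leibniz rule and the $TM$-piece $\rho[\phi,-]$, one checks on a generator $\psi_1\psi_2$ that
\[
 \tilde\rho\bigl(\Econn_\phi(\psi_1\psi_2)\bigr)
  =\Econn_\phi\bigl(\tilde\rho(\psi_1\psi_2)\bigr),
\]
using $\rho[\phi,\psi_i]=[\rho(\phi),\rho(\psi_i)]_{TM}$ from the anchored-almost-Lie axiom and $\rho(\psi_i)=0$; both sides then collapse and the identity holds. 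Since $\tilde\rho(\psi_1\psi_2)$ is a sum of terms with one $\rho(\psi_i)$ factor, it in fact vanishes identically on $\Gamma(\ker\rho)$-generators, so the section condition is automatic on decomposables and extends by $\smooth(M)$-linearity and the derivation property; the content is just that the derivation extension does not leave the subspace, which the displayed intertwining guarantees.

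Finally, for $\uD$ on $\Esforms{S^\bullet\ker\rho}$ the well-definedness follows from the same intertwining applied termwise to the Cartan formula: each summand $\Econn_{\psi_i}\langle\alpha,\cdots\rangle$ stays in $S^\bullet\ker\rho$ by the connection case just treated, and each bracket summand $\langle\alpha,[\psi_i,\psi_j]\wedge\cdots\rangle$ is fine because $[\psi_i,\psi_j]\in\Gamma(\ker\rho)$ by Lemma~\ref{l:kerrhoConn}. Thus no new computation beyond the decomposable-generator check is needed, and compatibility with restriction of $E$-forms to the $\tilde\rho$-closed sections is inherited from the smooth case. I would remark in passing that, exactly as noted before the Corollary for the $\wedge$-valued case, the extended $\uD$ still need not square to zero -- its square is governed by the $H$-twist of the Jacobiator -- which is what motivates the na\"\i ve cohomology construction in Section~3.
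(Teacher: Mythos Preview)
Your core approach---extend $\Enabla$ and $\uD$ to the symmetric powers by the Leibniz rule---is exactly the paper's proof; the paper records only the formula $\Econn_\phi(\psi_1\psi_2):=(\Econn_\phi\psi_1)\psi_2+\psi_1\Econn_\phi\psi_2$ and says $\uD$ extends analogously, with no discussion of well-definedness relative to the ambient description of $\Gamma(S^\bullet\ker\rho)$ in the Remark.

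Your added well-definedness argument is therefore a worthwhile supplement, but it has a gap precisely at the step you call ``the main obstacle''. You verify the intertwining $\tilde\rho\circ\Econn_\phi=\Econn_\phi\circ\tilde\rho$ only on products $\psi_1\psi_2$ with $\psi_i\in\Gamma(\ker\rho)$, where both sides are identically zero and the check is vacuous. That is not enough: by the Remark, a section of $S^k\ker\rho$ is a smooth section of $S^kE$ annihilated by $\tilde\rho$, and in the singular case such a section need not be a $\smooth(M)$-combination of products of sections of $\ker\rho$. The correct argument is to extend the operator $\psi\mapsto[\phi,\psi]$ by Leibniz to all of $\Gamma(S^\bullet E)$ (as a first-order differential operator, not yet a connection) and to check the intertwining on generators $\psi_1\psi_2$ with \emph{arbitrary} $\psi_i\in\Gamma(E)$; the anchor-morphism axiom $\rho[\phi,\psi_i]=[\rho(\phi),\rho(\psi_i)]_{TM}$ then yields the identity nontrivially, and preservation of $\ker\tilde\rho$ follows for every section in the Remark's sense. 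The $\smooth(M)$-linearity in $\phi$ of the restricted operator is a separate one-line check, the anomaly under $\phi\mapsto f\phi$ being a contraction of $\ud f$ against $\tilde\rho(\Psi)$.
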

\begin{proof}  Extend using the Leibniz rule
\begin{align*}
  \Econn_\phi(\psi_1\psi_2) &:= (\Econn_\phi\psi_1)\psi_2 +\psi_1\Econn_\phi\psi_2
\end{align*}
$\uD$ extends in an analog way.
\end{proof}

\section{Definition, examples, elementary properties}
\begin{defn}\label{d:HLie}  An H-twisted Lie algebroid is a vector bundle $E\to M$ together with a bundle map $\rho\:E\to TM$ (called the anchor), a section $H\in\Esforms[3]{\ker\rho}$, and a skew-symmetric bracket $[.,.]\:\Gamma(E)\wedge\Gamma(E)\to\Gamma(E)$ subject to the axioms
\begin{align}
 [\phi,[\psi_1,\psi_2]] &= [[\phi,\psi_1],\psi_2] +[\psi_1,[\phi,\psi_2]] +H(\phi,\psi_1,\psi_2)  \label{Jacobi} \\
 [\phi, f\cdot\psi] &= \rho(\phi)[f]\cdot\psi +f\cdot[\phi,\psi] \label{Leibniz} \\
 \uD H &= 0 \label{Hclosed}
\end{align}  where $f\in\smooth(M)$, $\phi,\psi,\psi_i\in\Gamma(E)$ and $\uD$ is the one defined for anchored almost Lie algebroids.
\end{defn}

\begin{lemma}\label{l:rho}  $\rho$ is a morphism of brackets, i.e.\ 
\begin{align}
 \rho[\psi_1,\psi_2] &= [\rho(\psi_1),\rho(\psi_2)]
\end{align}  for $\psi_i\in\Gamma(E)$.
\end{lemma}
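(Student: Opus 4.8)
The plan is to adapt the classical proof that the anchor of a Lie algebroid is a Lie algebra morphism: one feeds the twisted Jacobi identity~\eqref{Jacobi} an argument of the form $f\psi_2$ and reads the statement off the terms carrying a derivative of $f$. The twist $H$ does not interfere, since as an $E$-$3$-form it is $\smooth(M)$-linear in each slot, so $H(\phi,\psi_1,f\psi_2)=f\,H(\phi,\psi_1,\psi_2)$; in particular neither the closedness $\uD H=0$ nor the condition $H\in\Esforms[3]{\ker\rho}$ enters.

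Concretely, fix $\phi,\psi_1,\psi_2\in\Gamma(E)$ and $f\in\smooth(M)$ and expand both sides of~\eqref{Jacobi} with $\psi_2$ replaced by $f\psi_2$, repeatedly using the Leibniz rule~\eqref{Leibniz} to pull every $f$ in front of the brackets. On the left the nested bracket yields, besides $f\,[\phi,[\psi_1,\psi_2]]$, the second-order term $\rho(\phi)\big[\rho(\psi_1)[f]\big]\cdot\psi_2$ and the first-order terms $\rho(\psi_1)[f]\cdot[\phi,\psi_2]$ and $\rho(\phi)[f]\cdot[\psi_1,\psi_2]$; on the right one gets, besides $f\,[[\phi,\psi_1],\psi_2]+f\,[\psi_1,[\phi,\psi_2]]+f\,H(\phi,\psi_1,\psi_2)$, the second-order term $\rho(\psi_1)\big[\rho(\phi)[f]\big]\cdot\psi_2$, the ``anchor of a bracket'' term $\rho\big([\phi,\psi_1]\big)[f]\cdot\psi_2$, and again $\rho(\psi_1)[f]\cdot[\phi,\psi_2]$ and $\rho(\phi)[f]\cdot[\psi_1,\psi_2]$. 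Subtracting $f$ times~\eqref{Jacobi} for $\phi,\psi_1,\psi_2$ cancels all terms in which a bare factor $f$ sits in front of a bracket, and the first-order terms cancel pairwise, leaving
\[
 \Big(\rho(\phi)\big[\rho(\psi_1)[f]\big]-\rho(\psi_1)\big[\rho(\phi)[f]\big]-\rho\big([\phi,\psi_1]\big)[f]\Big)\cdot\psi_2 = 0 .
\]
The first difference equals $[\rho(\phi),\rho(\psi_1)]_{TM}[f]$ by definition of the bracket of vector fields, so this is $\big([\rho(\phi),\rho(\psi_1)]_{TM}-\rho[\phi,\psi_1]\big)[f]\cdot\psi_2=0$ for all $f\in\smooth(M)$ and $\psi_2\in\Gamma(E)$.

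To conclude, at each point $x\in M$ choose $\psi_2$ with $\psi_2(x)\neq0$ (nothing to prove if $\rk E=0$) and $f$ with prescribed differential at $x$: then the vector field $[\rho(\phi),\rho(\psi_1)]_{TM}-\rho[\phi,\psi_1]$ annihilates every function at every point, hence vanishes, which is the asserted identity. The single laborious step is the bracket expansion in the middle — about half a dozen nested applications of~\eqref{Leibniz} — but it is purely mechanical; alternatively one may first note, from one application of~\eqref{Leibniz} together with skew-symmetry, that $D(\psi_1,\psi_2):=\rho[\psi_1,\psi_2]-[\rho(\psi_1),\rho(\psi_2)]_{TM}$ is $\smooth(M)$-bilinear, hence a section of $\wedge^2E^*\otimes TM$, and then run the same computation to see $D=0$.
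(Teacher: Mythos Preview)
Your proof is correct and follows the same strategy as the paper's: feed $f\psi_2$ into the twisted Jacobi identity~\eqref{Jacobi}, expand with the Leibniz rule~\eqref{Leibniz}, and read off the claim from the terms carrying derivatives of $f$. The only noteworthy difference is that the paper's terse proof invokes $\rho\circ H=0$ (i.e.\ that $H$ lands in $\ker\rho$), whereas you correctly point out that merely the $\smooth(M)$-linearity of $H$ suffices for the $H$-contributions to cancel --- your remark is in fact sharper on this point.
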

\begin{proof}  expand using $\rho(\psi)[f]\cdot\phi=[\psi,f\cdot\phi]-f\cdot[\psi,\phi]$, apply the Jacobi identity \eqref{Jacobi} and note that $\rho\circ H=0$ due to $H\in\Esforms[3]{\ker\rho}$.
\end{proof}

\begin{ex}\label{ex:rk3}  Let $(E,\rho,[.,.]_0)$ be a Lie algebroid of rank 3.  Define an E-connection $\nabla^{E0}$ on $\Gamma(S^\bullet \ker\rho)$ as in the Lemma~\ref{l:kerrhoConn} and an associated $\uD_0$.  

Take an E2-form $B\in\Esforms[2]{\ker\rho}$ and define
\begin{align}
  H &:= \uD_0 B, \\
  [\phi,\psi]_B &:= [\phi,\psi]_0+B(\phi,\psi),
\end{align}
{It follows that the Jacobi identity of $[.,.]_B$ is twisted by $\uD_0 B=H$.  Due to $\rk E=3$ we also observe that $\uD_B H=0$ for the differential defined in the Definition~\ref{d:HLie}.}

Therefore $(E,\rho,[.,.]_B,H)$ is an H-twisted Lie algebroid.
\end{ex}

\begin{ex}  We can generalize the last example if we take $\rk E\ge3$ with Lie algebroid structure $(E,\rho,[.,.]_0)$ and $\nabla^{E0}$, $\uD_0$ as before.  Starting from an arbitrary $B\in\Esforms[2]{\ker\rho}$ we observe again that $H:=\uD_0 B$ is the twist of the Jacobi identity.  For  $\alpha\in\Eforms[1]$ the operator $\uD$ is
\begin{align*}
  \uD \alpha = \uD_0\alpha-\alpha\circ B =: \uD_0\alpha+\tilde{B}(\alpha)
\intertext{and for $\phi\in\Gamma(\ker\rho)$}
  \uD \phi   = \uD_0\phi +B(\phi,\cdot)=: \uD_0\phi +\tilde{B}(\phi)
\end{align*}  and for arbitrary $\Psi\in\Esforms{S^\bullet \ker\rho}$ by extension by Leibniz rule.  The last condition \eqref{Hclosed} is therefore equivalent to
 $$ 0=\uD H = \uD_0(\uD_0 B)+\tilde{B}(\uD_0 B)= \tilde{B}(\uD_0 B) \;,
 $$ where we used that the differential $\uD_0$ of the (flat) module $\ker\rho$ squares to 0.  Therefore we can generalize the previous example if we can solve this quadratic first order (partial) differential equation.
\end{ex}

From \cite{Baez:03vi} we take the following definition of a two-term $L_\infty$-algebra.
\begin{defn}  A two-term $L_\infty$-algebra is a two-term complex $0\to V_1\xrightarrow{\partial} V_0\to 0$ together with three more maps
\begin{align}
  [.,.]\:V_0\wedge V_0&\to V_0, \nonumber\\
  \tri\:V_0\otimes V_1&\to V_1, \nonumber\\
  l_3\: V_0\wedge V_0\wedge V_0 &\to V_1  \nonumber
\intertext{Subject to the rules}
  [\phi,\partial f] &= \partial(\phi\tri f)  \label{n=2}\\
  (\partial f)\tri g + (\partial g)\tri f &= 0  \label{n=2b}\\
  [\phi_1,[\phi_2,\phi_3]\,] +\cycl &= \partial l_3(\phi_1,\phi_2,\phi_3)  \label{n=3}\\
  \phi_1\tri(\phi_2\tri f) -\phi_2\tri(\phi_1\tri f) -[\phi_1,\phi_2]\tri f &= l_3(\phi_1,\phi_2,\partial f)  \label{n=3b}\\
  l_3([\phi_1,\phi_2]\wedge\phi_3\wedge\phi_4) +\phi_1\tri l_3(\phi_2&\wedge\phi_3\wedge\phi_4) +\unsh = 0   \label{n=4}
\end{align} where $\phi_i\in V_0$ and $f\in V_1$.
\end{defn}

\begin{prop}  The $H$-twisted Lie algebroid $(E,\rho,[.,.],H)$ is a two-term $L_\infty$-algebra with the following identifications:
\begin{gather*}
  0\to \Gamma(\ker\rho)=:V_1\xhookrightarrow{\partial} \Gamma(E)=:V_0 \to 0\quad\text{a complex}, \\
  l_2\:V_0\wedge V_0\to V_0 : (\phi,\psi)\mapsto [\phi,\psi] \\
  \tri\:V_0\otimes V_1\to V_1 : (\phi,\psi)\mapsto \Econn_\phi\psi \\
  l_3\:V_0\wedge V_0\wedge V_0 \to V_1 : (\psi_0,\psi_1,\psi_2)\mapsto H(\psi_0,\psi_1,\psi_2)
\end{gather*}
\end{prop}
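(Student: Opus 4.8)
The plan is to verify the five defining equations \eqref{n=2}--\eqref{n=4} in turn under the stated identifications. Four of them will fall out directly from the twisted Jacobi identity \eqref{Jacobi} together with skew-symmetry of the bracket, and the fifth, \eqref{n=4}, will turn out to be a reformulation of the closedness condition \eqref{Hclosed}. First I would record that the proposed maps take values where they should: by Lemma~\ref{l:rho} the anchor is a morphism of brackets, so for $\phi\in\Gamma(E)$ and $\psi\in\Gamma(\ker\rho)$ one has $\rho[\phi,\psi]=[\rho(\phi),0]=0$; hence $\Econn_\phi\psi=[\phi,\psi]\in\Gamma(\ker\rho)$ (this is Lemma~\ref{l:kerrhoConn}) and $\tri$ is well-defined, while $l_3=H$ takes values in $\ker\rho$ by the hypothesis $H\in\Esforms[3]{\ker\rho}$ and $\partial$ is simply the inclusion $\Gamma(\ker\rho)\hookrightarrow\Gamma(E)$. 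Thus $0\to V_1\xrightarrow{\partial}V_0\to 0$ is a two-term complex.

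Equations \eqref{n=2} and \eqref{n=2b} are then formal. The first reads $[\phi,\partial f]=\partial(\phi\tri f)$, which holds on the nose since $\phi\tri f:=\Econn_\phi f:=[\phi,f]$ and $\partial$ is the inclusion; its only content is the well-definedness just noted. The second reads $(\partial f)\tri g+(\partial g)\tri f=[f,g]+[g,f]=0$ and is immediate from skew-symmetry. For \eqref{n=3}, rewrite \eqref{Jacobi} as $[\phi_1,[\phi_2,\phi_3]]-[[\phi_1,\phi_2],\phi_3]-[\phi_2,[\phi_1,\phi_3]]=H(\phi_1,\phi_2,\phi_3)$; using skew-symmetry to replace $-[[\phi_1,\phi_2],\phi_3]$ by $[\phi_3,[\phi_1,\phi_2]]$ and $-[\phi_2,[\phi_1,\phi_3]]$ by $[\phi_2,[\phi_3,\phi_1]]$, the left-hand side becomes the cyclic sum $[\phi_1,[\phi_2,\phi_3]]+\cycl$, and since $l_3=H$ with $\partial$ the inclusion this is exactly \eqref{n=3}. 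For \eqref{n=3b}, specialize \eqref{Jacobi} to the case where the third argument is some $f\in\Gamma(\ker\rho)$ and rewrite the resulting brackets against $\ker\rho$ as covariant derivatives, $[\phi,[\psi,f]]=\Econn_\phi\Econn_\psi f$, $[[\phi,\psi],f]=\Econn_{[\phi,\psi]}f$, and so on. This turns \eqref{Jacobi} into $\Econn_\phi\Econn_\psi f-\Econn_\psi\Econn_\phi f-\Econn_{[\phi,\psi]}f=H(\phi,\psi,f)$, which is \eqref{n=3b} since $l_3(\phi,\psi,\partial f)=H(\phi,\psi,f)$.

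The one substantive equation is \eqref{n=4}, and here the claim is that it coincides with the condition $\uD H=0$. Under the identifications each summand $l_3([\phi_i,\phi_j]\wedge\phi_k\wedge\phi_l)$ becomes $H([\phi_i,\phi_j],\phi_k,\phi_l)$ and each summand $\phi_i\tri l_3(\phi_j\wedge\phi_k\wedge\phi_l)$ becomes $\Econn_{\phi_i}\bigl(H(\phi_j,\phi_k,\phi_l)\bigr)$; since there is no $l_4$ in the picture, these exhaust all the terms of the two-term $L_\infty$ relation, and their alternating sum is precisely the Cartan-type expression $\langle\uD H,\phi_1\wedge\phi_2\wedge\phi_3\wedge\phi_4\rangle$ from the definition of $\uD$ in Section~2. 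Hence \eqref{n=4} holds for all $\phi_i\in\Gamma(E)$ if and only if $\uD H=0$, i.e.\ axiom \eqref{Hclosed}.

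The main obstacle is thus not conceptual but purely a matter of sign bookkeeping: one must check that the ``unshuffle'' signs in \eqref{n=4} agree with the factors $(-1)^i$ and $(-1)^{i+j}$ in the formula for $\uD$, and likewise that the orderings picked in \eqref{n=3} and \eqref{n=3b} match those produced by \eqref{Jacobi} after the use of skew-symmetry. Once these signs are reconciled, assembling the five verifications completes the proof.
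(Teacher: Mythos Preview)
Your proof is correct and follows essentially the same route as the paper: each of \eqref{n=2}, \eqref{n=2b}, \eqref{n=3}, \eqref{n=3b} is read off from the definition of $\Enabla$, skew-symmetry, and the twisted Jacobi identity \eqref{Jacobi}, while \eqref{n=4} is identified with the closedness condition \eqref{Hclosed}. You are somewhat more explicit than the paper (in particular about well-definedness of $\tri$ and the cyclic rewriting for \eqref{n=3}), but the argument is the same.
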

\begin{proof}  \eqref{n=2} follows from the definition of the connection, \eqref{n=3} is the axiom of homotopy Jacobi identity \eqref{Jacobi}, \eqref{n=4} can be identified with the closeness of $H$ under the derivative $\uD$ \eqref{Hclosed}.  \eqref{n=2b} follows from the skew-symmetry of the bracket (and the definition of the connection $\Enabla$) and \eqref{n=3b} for $\psi_i\in\Gamma(E)$ and $\phi\in\Gamma(\ker\rho)$ derives as follows:
\begin{align*}
&(\Econn_{\psi_1}\Econn_{\psi_2}-\Econn_{\psi_2}\Econn_{\psi_1}-\Econn_{[\psi_1,\psi_2]})\phi  \\
  =& [\psi_1,[\psi_2,\phi]] - [\psi_2,[\psi_1,\phi]] - [[\psi_1,\psi_2],\phi] \\
  =& H(\phi,\psi_1,\psi_2) \;.
\end{align*}
\end{proof}

\subsection{Na{\"\i}ve cohomology}
\begin{prop}\label{p:curv}  The operator $\uD$ fulfills a Leibniz rule, i.e.\ for $\alpha\in\Esforms[|\alpha|]{S^\bullet\ker\rho}$, $\beta\in\Esforms{S^\bullet \ker\rho}$ it is
 \[ \uD(\alpha\wedge\beta) = (\uD\alpha)\wedge\beta +(-1)^{|\alpha|}\alpha\wedge\uD\beta \;.
 \]
 \[    \uD^2(\alpha_1\wedge\dots\wedge\alpha_k) = (\uD^2\alpha_1)\wedge\alpha_2\wedge\dots\wedge\alpha_k+\dots+\alpha_1\wedge\dots\alpha_{k-1}\wedge(\uD^2\alpha_k)
 \]
  Note that the operator $\uD$ does not square to 0 in general.  It is
\begin{align}
  \uD^2f &= 0, \\
  \uD^2\alpha_1 &= -\alpha_1\circ H=:\Ht(\alpha_1), \nonumber\\
  \<\uD^2\phi,\psi_1\wedge\psi_2\> &= H(\phi,\psi_1,\psi_2)=:\<\tilde{H}(\phi), \psi_1\wedge\psi_2\>  \label{D^2phi}
\end{align} for $f\in\smooth(M)$, $\alpha_i\in\Eforms[1]=\Gamma(E^*)$, $\phi\in\Gamma(\ker\rho)$, and $\psi_i\in\Gamma(E)$.
\end{prop}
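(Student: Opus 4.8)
The plan is to verify the Leibniz rule for $\uD$ first, then deduce the product formula for $\uD^2$ as a formal consequence, and finally compute $\uD^2$ explicitly on the three types of generators $f\in\smooth(M)$, $\alpha\in\Gamma(E^*)$, and $\phi\in\Gamma(\ker\rho)$.

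First I would establish the graded Leibniz rule $\uD(\alpha\wedge\beta)=(\uD\alpha)\wedge\beta+(-1)^{|\alpha|}\alpha\wedge\uD\beta$. Since $\uD$ was defined on $\Esforms{\ker\rho}$ by the Cartan-type formula and extended to $\Esforms{S^\bullet\ker\rho}$ via the symmetric-algebra Leibniz rule in the Corollary, it suffices to check the wedge-Leibniz rule on generators: on $\smooth(M)$ and $\Gamma(E^*)$ in the $\wedge^\bullet E^*$ factor, and on $\Gamma(\ker\rho)$ in the $S^\bullet\ker\rho$ factor. This is the same computation that proves the Leibniz rule for the exterior covariant derivative of an ordinary Lie algebroid connection; the only inputs are the derivation property of $\Enabla$ (Lemma~\ref{l:kerrhoConn}), the Leibniz rule \eqref{Leibniz} for $[.,.]$, and bilinearity, so the twist $H$ plays no role here. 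Granting this, the formula for $\uD^2$ on a product follows by applying $\uD$ twice and observing that the cross terms $(-1)^{|\alpha|}(\uD\alpha)\wedge(\uD\beta)$ and $(-1)^{|\alpha|+1}(\uD\alpha)\wedge(\uD\beta)$ cancel; iterating gives the $k$-fold statement. In particular $\uD^2$ is $\smooth(M)$-linear and a derivation of degree $0$ with respect to $\wedge$, hence determined by its values on generators.

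Next I compute $\uD^2$ on generators. For $f\in\smooth(M)$: $\uD f=\rho(\cdot)[f]\in\Gamma(E^*)$, and $\langle\uD^2 f,\psi_1\wedge\psi_2\rangle$ expands, via the defining Cartan formula, to $\rho(\psi_1)[\rho(\psi_2)[f]]-\rho(\psi_2)[\rho(\psi_1)[f]]-\rho[\psi_1,\psi_2][f]$, which vanishes by Lemma~\ref{l:rho} (the anchor is a bracket morphism). For $\phi\in\Gamma(\ker\rho)$: unwinding the Cartan formula for $\uD$ twice, the $\ker\rho$-valued $1$-form $\uD\phi$ has $\langle\uD\phi,\psi\rangle=\Econn_\psi\phi=[\psi,\phi]$, and then $\langle\uD^2\phi,\psi_1\wedge\psi_2\rangle$ equals the curvature expression $\Econn_{\psi_1}\Econn_{\psi_2}\phi-\Econn_{\psi_2}\Econn_{\psi_1}\phi-\Econn_{[\psi_1,\psi_2]}\phi=[\psi_1,[\psi_2,\phi]]-[\psi_2,[\psi_1,\phi]]-[[\psi_1,\psi_2],\phi]$, which by the twisted Jacobi identity \eqref{Jacobi} is exactly $H(\phi,\psi_1,\psi_2)$; this is precisely the computation already carried out in the proof of the two-term $L_\infty$ proposition. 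For $\alpha\in\Gamma(E^*)$: since $\uD$ is a graded derivation and every element of $\Gamma(E^*)$ pairs functions and sections of $\ker\rho$ through $\langle\cdot,\cdot\rangle$, one can either pair $\alpha$ against $\uD^2$ of the appropriate arguments and transfer the derivative, or argue directly that $\uD^2\alpha=-\alpha\circ H=:\Ht(\alpha)$ by the same Cartan-formula bookkeeping, the sign $-1$ coming from $\alpha$ sitting in degree $1$; consistency with the $\phi$ case is guaranteed because $\langle\alpha,\phi\rangle\in\smooth(M)$ has $\uD^2\langle\alpha,\phi\rangle=0$, forcing $\langle\uD^2\alpha,\phi\rangle+\langle\alpha,\uD^2\phi\rangle=0$, i.e.\ $\langle\Ht(\alpha),\phi\rangle=-\langle\alpha,\tilde H(\phi)\rangle$.

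The main obstacle is purely bookkeeping: keeping the Koszul signs straight when pushing $\uD$ through the Cartan-type formula a second time, and making sure the two extensions of $\uD$ (to higher $\wedge$-degree and to higher $S$-degree) interact correctly so that $\uD^2$ really is the algebra derivation induced by its action $f\mapsto 0$, $\alpha\mapsto\Ht(\alpha)$, $\phi\mapsto\tilde H(\phi)$ on generators. No new geometric input beyond \eqref{Jacobi}, \eqref{Leibniz}, and Lemma~\ref{l:rho} is needed; condition \eqref{Hclosed} is deliberately not used here, since the point of the proposition is that $\uD^2$ measures the failure of the Jacobi identity and is built from $H$ itself.
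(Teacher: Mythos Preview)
Your argument is correct and mirrors the paper's proof: Leibniz rule first (the paper just remarks that $\uD$ is an odd first-order linear differential operator), then $\uD^2$ as a derivation via cancellation of cross terms plus induction, then the explicit Cartan-formula computations on $f$, $\alpha$, $\phi$ using Lemma~\ref{l:rho} and the twisted Jacobi identity~\eqref{Jacobi} (the paper treats $\alpha$ before $\phi$ and writes out the $\alpha$-case explicitly rather than deferring to ``the same bookkeeping''). One caveat: your duality aside $\uD^2\langle\alpha,\phi\rangle=0\Rightarrow\langle\uD^2\alpha,\phi\rangle+\langle\alpha,\uD^2\phi\rangle=0$ is not a self-contained alternative argument, since the Leibniz rule you established is for $\wedge$, not for the contraction pairing, and testing only against $\phi\in\Gamma(\ker\rho)$ would not pin down $\uD^2\alpha$ on all of $\wedge^3E$; but as you also offer the direct computation this does not affect the overall correctness.
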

\begin{proof} For the first statement, note that $\uD$ is an odd first order linear differential operator.

For the second statement, observe
\begin{align*}
  \uD^2(\alpha_1\wedge\alpha_2) &= (\uD^2\alpha_1)\wedge\alpha_2 +(-1)^{|\alpha_1|}(\uD\alpha_1\wedge\uD\alpha_2-\uD\alpha_1\wedge\uD\alpha_2) \\&\quad+\alpha_1\wedge\uD^2\alpha_2
\intertext{and conclude for $k$ terms $\alpha_i$ by induction.  It thus remains to show the formulas for $f$, $\alpha$, and $\phi\in\Gamma(\ker\rho)$.}
 \<\uD^2 f,\psi_1,\psi_2\> &= \rho(\psi_1)[\rho(\psi_2)[f]] -\rho(\psi_2)[\rho(\psi_1)[f]] -\rho[\psi_1,\psi_2])[f]  \\
  &= ([\rho(\psi_1),\rho(\psi_2)]-\rho[\psi_1,\psi_2])[f] = 0
\intertext{due to Lemma~\ref{l:rho}.  For $\alpha\in\Eforms[1]=\Gamma(E^*)$ we have}
  \<\uD^2\alpha,\psi_0,\psi_1,\psi_2\> =& \rho(\psi_0)[\<\uD\alpha,\psi_1,\psi_2\>] -\<\uD\alpha,[\psi_0,\psi_1],\psi_2\> +\cycl \\
  =& (\rho(\psi_0)\rho(\psi_1)-\rho(\psi_1)\rho(\psi_0)-\rho[\psi_0,\psi_1])[\<\alpha,\psi_2\>] +\cycl+\\
  &+\<\alpha,[[\psi_0,\psi_1],\psi_2]+\cycl\>
\end{align*}
{The first term vanishes due to Lemma~\ref{l:rho}.  The second term simplifies using the Jacobi identity and gives the claim.}  \eqref{D^2phi} is property \eqref{n=3b} of a two-term $L_\infty$-algebra.
\end{proof}

\begin{rem}  
{Extend moreover the operator $\Ht$ from the last proposition}
\begin{align}
  \Ht&\:\Esforms[p+1]{S^q\ker\rho}\to \Esforms[p+3]{S^q\ker\rho} : \tilde{H}(\alpha_0\wedge\dots\alpha_{p}\otimes\psi_1\cdots\psi_q) =\\
  =& \sum_{i=0}^p (-1)^i\tilde{H}(\alpha_i)\wedge\alpha_0\dots\hat{\alpha}_i\dots\alpha_p\otimes\psi_1\cdots\psi_q \nonumber\\
  &+\sum_{j=1}^q \alpha_0\wedge\dots\alpha_p\wedge\tilde{H}(\psi_j)\cdot\psi_1\dots\hat{\psi}_j\cdots\psi_q \;.  \nonumber
\intertext{Remember the trace operator.  Given a tensor product of vectors and covectors, e.g.\ $\wedge^pE^*\otimes S^qE$ it contracts one vector with one covector, thus lowering the number of factors by one each.  In explicit formulas this is}
 \tr&\:\wedge^{k+1}E^*\otimes S^m E \to \wedge^kE^*\otimes S^{m-1}E :
  \tr(\alpha_0\wedge\dots\alpha_k\otimes\psi_1\cdots\psi_m) = \\
  &= \sum_{i=0}^k\sum_{j=1}^m (-1)^i \<\alpha_i,\psi_j\>\cdot\alpha_0\wedge\dots\hat{\alpha}_i\dots\alpha_k\otimes\psi_1\cdots\hat{\psi}_j\cdots\psi_m \;. \nonumber
\end{align}
\end{rem}

\begin{thm}[naive cohomology]\label{thm:ncoh}  Given an H-twisted Lie algebroid $(E,\rho,[.,.],H)$ we define its naive cochains as 
\begin{align}  C^{p,q}(E) := \ker&\bigl(\tilde{H}|\Esforms[p]{S^{q} \ker\rho}\bigr)
\intertext{and the differential}
  \ud_E\:C^{p,\bullet}(E) &\to C^{p+1,\bullet}(E): \Psi\mapsto \uD\Psi, \\
\intertext{together with an accompanying differential}
  \delta\:C^{p,q}(E) &\to C^{p-1,q-1}(E): \Psi\mapsto \tr\Psi
\end{align}
\end{thm}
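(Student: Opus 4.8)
The plan is to deduce everything from the single operator identity $\uD^2=\tilde{H}$ on $\Esforms{S^\bullet\ker\rho}$. By Proposition~\ref{p:curv} the operator $\uD^2$ is an even derivation of the wedge--symmetric product, and the extended operator $\tilde{H}$ of the preceding Remark is, by its defining formula, an even derivation of the same product; Proposition~\ref{p:curv} further shows that $\uD^2$ and $\tilde{H}$ agree on the algebra generators $\smooth(M)$, $\Gamma(E^*)=\Omega^1_M(E)$ and $\Gamma(\ker\rho)$ (namely $\uD^2f=0$, $\uD^2\alpha=-\alpha\circ H=\tilde{H}(\alpha)$, and \eqref{D^2phi}). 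Since a derivation is determined by its values on generators, $\uD^2=\tilde{H}$ on the sub-$\smooth(M)$-algebra they generate -- all of $\Esforms{S^\bullet\ker\rho}$ in the regular case, the general case being precisely why the Lie--Rinehart reformulation is wanted.

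First I handle $\ud_E$. From $\uD\circ\tilde{H}=\uD\circ\uD^2=\uD^2\circ\uD=\tilde{H}\circ\uD$ the exterior covariant derivative commutes with $\tilde{H}$; together with the extension corollary of Section~2 (that $\uD$ maps $\Esforms{S^\bullet\ker\rho}$ to itself, preserving the symmetric degree), this shows $\uD$ sends $C^{p,q}(E)=\ker(\tilde{H}|\Esforms[p]{S^q\ker\rho})$ into $C^{p+1,q}(E)$, so $\ud_E:=\uD|_C$ is well defined. Moreover $\ud_E^2=\uD^2|_C=\tilde{H}|_C=0$ directly from $C=\ker\tilde{H}$, so $(C^{\bullet,q}(E),\ud_E)$ is a cochain complex for each $q$, which is what legitimizes the name ``naive cohomology''.

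Next I handle $\delta=\tr$. That $\tr$ maps $\Esforms[p]{S^q\ker\rho}$ into $\Esforms[p-1]{S^{q-1}\ker\rho}$ is a fibrewise question of linear algebra: for a vector space $V$ with $\ker\rho_x=W$, one checks $\ker\tilde\rho\cap(\wedge^pV^*\otimes S^qV)=\wedge^pV^*\otimes S^qW$ by splitting $V=W\oplus U$ with $\rho|_U$ injective and using injectivity of the polarisation $S^bU\to S^{b-1}U\otimes U$; then $\tr$ visibly carries $\wedge^pV^*\otimes S^qW$ into $\wedge^{p-1}V^*\otimes S^{q-1}W$, and the same bookkeeping gives $\tr^2=0$ fibrewise, hence $\delta^2=0$. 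It remains to show $\tr$ preserves $\ker\tilde{H}$. Both $\tilde{H}$ and $\tr$ are first-order ``sum-over-factors'' operators built from the single datum $H$, so their graded commutator $[\tilde{H},\tr]$ is again such an operator; I would compute it on generators and show it lies in the left ideal generated by $\tilde{H}$, i.e. $[\tilde{H},\tr]=A\circ\tilde{H}$ for a suitable $A$. Then $[\tilde{H},\tr]$ annihilates $\ker\tilde{H}$, so on $C^{p,q}(E)$ both $\tilde{H}\circ\tr$ and (a sign times) $\tr\circ\tilde{H}$ vanish, giving $\delta(C^{p,q}(E))\subseteq C^{p-1,q-1}(E)$; the interaction of $\ud_E$ and $\delta$ is then recorded by an analogous computation.

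The routine parts are quick: the derivation argument for $\uD^2=\tilde{H}$, the $\uD$-side of the statement, and the fibrewise linear algebra for $\tr$. The main obstacle is the last point -- pinning down $[\tilde{H},\tr]$ exactly and controlling the signs and combinatorics of two interlocking derivation-type operators built from $H$; in particular one must check that the ``self-contraction'' contributions, where the $\ker\rho$-value of some $\tilde{H}(\psi_j)=\iota_{\psi_j}H$ gets traced against its own covector slots, cancel or are absorbed into the ideal generated by $\tilde{H}$, which is exactly what makes $\tr$ descend to the naive cochains.
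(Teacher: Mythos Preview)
Your treatment of $\ud_E$ matches the paper's. Both rest on $\uD^2=\tilde H$ from Proposition~\ref{p:curv}; the paper records the needed commutation as the identity $\uD\circ\tilde H-\tilde H\circ\uD=\widetilde{\uD H}$, which vanishes by axiom~\eqref{Hclosed}, while your shortcut $\uD\tilde H=\uD\,\uD^2=\uD^2\,\uD=\tilde H\uD$ is a clean rephrasing of the same fact. The conclusion $\ud_E^2=\uD^2|_{\ker\tilde H}=\tilde H|_{\ker\tilde H}=0$ is then identical.

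Where you diverge is on $\delta=\tr$. The paper's argument is much shorter: it states as a lemma (without proof) the \emph{exact} commutation $\tilde H\circ\tr=\tr\circ\tilde H$, so that $\tr$ preserves $\ker\tilde H$ immediately; and for $\tr^2=0$ it uses a polarization trick, checking $\tr^2(\alpha_1\wedge\alpha_2\otimes\psi\psi)=\langle\alpha_1,\psi\rangle\langle\alpha_2,\psi\rangle-\langle\alpha_2,\psi\rangle\langle\alpha_1,\psi\rangle=0$ and then polarizing in $\psi$ and inducting. Your plan---fibrewise linear algebra for $\tr^2=0$, and a weaker goal $[\tilde H,\tr]=A\circ\tilde H$ for some $A$---is more elaborate than what the paper asserts: the paper claims the commutator is identically zero, so the ``self-contraction'' terms you flag are supposed to cancel outright, not merely land in the left ideal of $\tilde H$. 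Your caution about that lemma is not unreasonable, since the paper offers no argument for it, but measured against the paper's own proof you are working harder than it does; if you accept the exact commutation, your remaining obstacle disappears.
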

Its partial cohomology $H^\bullet_{naive}(E):=H^\bullet(C^{\bullet,\bullet}(E),\ud_E)$ is called the na{\"\i}ve cohomology.
\begin{proof}
\begin{lemma} $\uD\circ\Ht -\Ht\circ\uD=\widetilde{\uD H}$ and $\Ht\circ\tr=\tr\circ\Ht$ \qed
\end{lemma}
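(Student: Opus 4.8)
The plan is to establish the two operator identities by reducing them to the generating cases $f\in\smooth(M)$, $\alpha\in\Eforms[1]$, and $\phi\in\Gamma(\ker\rho)$, since both $\uD$, $\Ht$, and $\tr$ are built from the Leibniz rule on the tensor algebra $\wedge^\bullet E^*\otimes S^\bullet\ker\rho$ and hence determined by their action on generators. First I would verify the Leibniz-type compatibility: $\uD$ is an odd derivation of degree $+1$ by Proposition~\ref{p:curv}, $\Ht=\uD^2$ is an even operator of degree $+2$ which (being the square of a derivation) is again a derivation of $\wedge^\bullet E^*\otimes S^\bullet\ker\rho$, and $\tr$ is a degree $(-1,-1)$ derivation in the obvious bigraded sense. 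The graded commutator of a $\uD$ with the derivation $\Ht$ is again a derivation, so it suffices to match $\uD\Ht-\Ht\uD$ against $\widetilde{\uD H}$ on $f$, $\alpha$, and $\phi$.

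For the first identity, on $f\in\smooth(M)$ both sides vanish: $\uD^2 f=0$ by Proposition~\ref{p:curv}, so $\Ht f=0$, and $\widetilde{\uD H}$ applied to a function is zero by the grading of the extension of $\Ht$ in the preceding Remark. On $\alpha\in\Gamma(E^*)$ we have $\Ht\alpha=-\alpha\circ H$ and on $\phi\in\Gamma(\ker\rho)$ we have $\langle\Ht\phi,\psi_1\wedge\psi_2\rangle=H(\phi,\psi_1,\psi_2)$; I would then compute $\uD\Ht-\Ht\uD$ on each of these and compare with the explicit formula for $\widetilde{\uD H}$ from the Remark. The cleanest route is to observe that $\widetilde{\uD H}$ is, tautologically, the derivation whose value on a generator $\chi$ is $\uD^2$ evaluated using $\uD H$ in place of $H$; so the identity $\uD\Ht-\Ht\uD=\widetilde{\uD H}$ is nothing but the statement that differentiating the relation $\uD^2=\Ht$ once more gives $\uD^3=\uD\Ht$ on one hand and $\Ht\uD + (\text{correction by }\uD H)$ on the other — i.e.\ it is the ``Bianchi identity'' for the curvature $\Ht$ of $\uD$. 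Concretely, $\uD^3 = \uD\circ\uD^2 = \uD\circ\Ht$ and $\uD^3 = \uD^2\circ\uD = \Ht\circ\uD$ would be equal if $\uD H=0$; in general the discrepancy is precisely $\widetilde{\uD H}$, which one reads off by expanding $\uD^3$ in two ways on generators.

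For the second identity $\Ht\circ\tr=\tr\circ\Ht$, I would again reduce to generators of $\wedge^{\ge1}E^*\otimes S^{\ge1}\ker\rho$, i.e.\ to elements of the form $\alpha\otimes\phi$ with $\alpha\in\Gamma(E^*)$, $\phi\in\Gamma(\ker\rho)$, since both $\Ht$ and $\tr$ are derivations and any element is a sum of products of such. On $\alpha\otimes\phi$ one has $\tr(\alpha\otimes\phi)=\langle\alpha,\phi\rangle\in\smooth(M)$, so $\Ht\tr(\alpha\otimes\phi)=0$; on the other side, $\Ht(\alpha\otimes\phi)=\Ht(\alpha)\otimes\phi+\alpha\wedge\Ht(\phi)$ where $\Ht(\alpha)\in\Esforms[3]{}$ and $\Ht(\phi)\in\Esforms[2]{\ker\rho}$, and applying $\tr$ must give zero; this is where the fact that $H\in\Esforms[3]{\ker\rho}$ — equivalently $\tilde\rho H=0$, and more importantly that the two occurrences of $H$ (once dualized into $E^*$, once inserted into $\ker\rho$) are governed by the same tensor — forces a cancellation. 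I expect this pairing-cancellation to be the only genuinely computational point; everything else is bookkeeping with derivations and bidegrees. The main obstacle, then, is keeping the signs straight when comparing the two expansions of $\uD^3$ on a generator (the Bianchi computation), since the interleaving of the $(-1)^i$ signs from the $\uD$-formula with the signs in the $\widetilde{\uD H}$ formula in the Remark is delicate; I would handle it by checking on $\alpha\in\Gamma(E^*)$ first, where the formula is shortest, and then invoke the derivation property to extend.
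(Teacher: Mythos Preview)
The paper's ``proof'' of this lemma is literally the symbol \qed\ --- no argument is given. So there is nothing to compare your attempt against; I can only assess whether your strategy works.

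For the first identity your plan is essentially sound, though you make it harder than necessary. Proposition~\ref{p:curv} already shows that $\uD^2$ and $\Ht$ agree on generators and that both are (even) derivations; hence $\Ht=\uD^2$ as operators on all of $\Esforms{S^\bullet\ker\rho}$. Then $\uD\circ\Ht-\Ht\circ\uD=\uD^3-\uD^3=0$, while $\widetilde{\uD H}=0$ by axiom~\eqref{Hclosed}. There is no ``discrepancy'' to track between the two expressions for $\uD^3$; your Bianchi narrative is not wrong, but the associativity $\uD\circ\uD^2=\uD^2\circ\uD$ already forces the commutator to vanish, so the only content is the axiom $\uD H=0$.

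For the second identity your argument has a genuine gap: $\tr$ is \emph{not} a derivation of the product on $\wedge^\bullet E^*\otimes S^\bullet\ker\rho$. Concretely, for $A=\alpha\otimes\phi$ and $B=\beta\otimes\psi$ one has
\[
\tr\bigl((\alpha\wedge\beta)\otimes\phi\psi\bigr)=\langle\alpha,\phi\rangle\,\beta\otimes\psi+\langle\alpha,\psi\rangle\,\beta\otimes\phi-\langle\beta,\phi\rangle\,\alpha\otimes\psi-\langle\beta,\psi\rangle\,\alpha\otimes\phi,
\]
while $(\tr A)\cdot B\pm A\cdot(\tr B)$ produces only the two ``diagonal'' terms. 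The cross terms --- pairing an $E^*$-factor from one tensor with a $\ker\rho$-factor from the other --- are exactly what makes $\tr$ a second-order operator (think of it as $\sum_a\iota_{e_a}\otimes\iota_{e^a}$, a contraction-Laplacian, not a vector field). Consequently your reduction ``check on $\alpha\otimes\phi$ and extend by the derivation property'' is not justified. Worse, even on the single test element $\alpha\otimes\phi$ the cancellation you anticipate is incomplete: after the obvious cancellation between $\iota_\phi\Ht(\alpha)$ and the $\langle\alpha,\cdot\rangle$-part of $\tr(\alpha\wedge\Ht(\phi))$, one is left with $-\alpha\wedge\tr\bigl(\Ht(\phi)\bigr)$, a term involving the internal trace of $H$ that your outline does not address. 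To make the argument go through you need either a direct computation of $[\Ht,\tr]$ as second-order operators (e.g.\ using $[\Ht,\iota_{e_a}\otimes\iota_{e^a}]=[\Ht,\iota_{e_a}]\,\iota_{e^a}+\iota_{e_a}\,[\Ht,\iota_{e^a}]$ and evaluating the two commutators of derivations), or an explanation of why those residual trace terms vanish or reassemble.
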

Now for $\Psi\in\ker\Ht$, $\uD^2\Psi=\Ht(\Psi)=0$ and 
$$\tr^2(\alpha_1\wedge\alpha_2\otimes\psi\psi)=\<\alpha_1,\psi\>\<\alpha_2,\psi\> -\<\alpha_2,\psi\>\<\alpha_1,\psi\>=0 $$ and by a polarization argument also for two arbitrary $\psi_1$ and $\psi_2$.  By an induction argument this extends to arbitrary $\Psi\in\Esforms[p]{S^q\ker\rho}$.
\end{proof}
Note that the two differentials do not interchange, i.e.\ this is not a double complex. The name na{\"\i}ve cohomology is in analogy to \cite{SX07}, because we cut down the cochains such that $\uD$ squares to 0.  It might thus be that $H^{2,0}_{naive}(E)\oplus H^{0,1}_{naive}(E)$ does not cover all infinitesimal deformations of $E$.

\begin{ex}\begin{enumerate}\item  If $(E,\rho,[.,.])$ is a Lie algebroid (and $H$ thus vanishes), the na{\"\i}ve cohomology coincides with Lie algebroid cohomology with coefficients in $S^\bullet\ker\rho$.
\item  Let $\g_0=\su(2)$ and $B:=\xi^1\xi^2\otimes X_1$.  Then $H=\vol\otimes X_2$ and 
 $$ C^{p,q}(\g,H) = \begin{cases} \wedge^p\g^*\otimes S^q\g \quad\text{for }p\ne1, \\
   \RR\xi^1\oplus\RR\xi^3  \quad\text{for }p=1,q=0, \\
   \left\{\pmatrix{a&0&b\\ c&d&e\\ f&0&-a}:a,\dots,f\in\RR\right\}  \quad\text{for }p=1=q,\\
    \dots\quad\text{for }p=1,n_1+n_2+n_3=q-1\ge1
 \end{cases}
 $$  The cohomology of $\su(2)$ is well known due to Whitehead's lemma (see e.g.\ \cite{Vara}), i.e.\ $H^p(\su(2))=0$ for $p=1,2$ and $\RR$ for $p=0,3$.  For the $B$-twisted algebra however this is
 $$ H^{p,0}_{naive}(\g,B) \cong \begin{cases} \RR  \quad\text{for }p=0,2,3 \\
   0  \quad\text{for }p=1\;,
 \end{cases}
 $$ i.e.\ the twist has an effect on $H^{2,0}(\g,B)$.  The cohomology with coefficients in $\g$ is
 $$ H^{p,1}_{naive}(\g,B) \cong \begin{cases} 0\quad\text{for }p=0,1,3 \\
   \RR^2\quad\text{for }p=2 \;.
 \end{cases}
 $$ as opposed to Whitehead's theorem (see, e.g.\ \cite[WhiteheadsLemmas]{planetmath}) for $\su(2)$, where the Lie algebra cohomology with coefficients in an irreducible module of rank at least 2 vanishes.
\end{enumerate}
\end{ex}

\subsection{$H$-twisted Lie--Rinehart algebras}
Since the cochains $C^{p,q}:=\Esforms[p]{S^q\ker\rho}$ are smooth sections of a possibly singular vector bundle, the more natural language for the $H$-twisted Lie algebroids is that of modules over (smooth) algebras.  Throughout this chapter $\R$ will be a commutative (associative) ring with unit 1 over a base field $k$.  We will denote by $\Der(\R)$ the derivations of $\R$ (being additive and fulfilling the usual Leibniz rule).  By $\otimes$ we denote the usual tensor product over $k$.  Given an $\R$-module $\E$, we denote $\E^*:=\Hom_\R(\E,\R)$ its dual module.  Remember the definition of Lie--Rinehart algebra \cite{Rin63,CL07}.
\begin{defn}  A Lie--Rinehart algebra $(\R,\E,[.,.],\rho)$ is an $\R$-module $\E$ that has the structure of a Lie algebra $(\E,[.,.])$ together with an $\R$-linear map $\rho\:\E\to\Der(\R)$ subject to the rules
\begin{align*}
  [\phi,[\psi_1,\psi_2]] &= [[\phi,\psi_1],\psi_2]+[\psi_1,[\phi,\psi_2]] \\
  [\phi, f\cdot\psi] &= \rho(\phi)[f]\cdot\psi +f\cdot[\phi,\psi] \\
  \rho[\phi,\psi] &= [\rho(\phi),\rho(\psi)]_{\Der(\R)}
\end{align*} where $\phi,\psi_i\in\E$, $f\in\R$.
\end{defn}
Note that the first axiom is the Jacobi-identity of the Lie algebra.  The second axiom is the Leibniz rule and the third axiom is the morphism-property for the anchor map $\rho$.  The last axiom follows from the first two for projective modules $\E$, however one usually does not make this restriction in the definition.

The analogy for $H$-twisted Lie algebroids is now.
\begin{defn}  An $H$-twisted Lie--Rinehart algebra $(\R,\E,[.,.],\rho,H)$ is an $\R$-module $\E$, a skew-symmetric $k$-linear bracket $[.,.]\:\E\wedge\E\to\E$, an $\R$-linear map $\rho\:\E\to\Der(\R)$, and an E-3-form $H$ with values in $\ker\rho$, i.e.\ $H\in\Hom_\R(\wedge^3\E,\ker\rho)$, subject to the rules
\begin{align}
  [\psi_1,[\psi_2,\psi_3]]+\cycl &= H(\psi_1,\psi_2,\psi_3) \\
  [\phi,f\cdot\psi] &= \rho(\phi)[f]\cdot\psi +f\cdot[\phi,\psi] \\
  \rho[\phi,\psi] &= [\rho(\phi),\rho(\psi)] \\
  \uD H &= 0\;,
\end{align} where $\phi,\psi_i\in\E$, $f\in\R$ and $\uD\:\Hom_\R(\wedge^\bullet\E,\ker\rho)\to\Hom_\R(\wedge^{\bullet+1}\E,\ker\rho)$ is the exterior covariant derivative induced by the $\E$-connection $\nabla\:\E\otimes\ker\rho\to\ker\rho$ which is induced by the bracket $[.,.]$.
\end{defn}
The construction of the $\E$-connection on $\ker\rho$ as well as the exterior covariant derivative is analog to the algebroid case.

\begin{ex} A big class of examples comes from $H$-twisted Lie algebroids, where $\R=\smooth(M)$, $\E=\Gamma(E)$, and $[.,.]$ and $H$ are the corresponding structures.  In particular $H$-twisted Lie algebras are 2-term $L_\infty$-algebras with the differential $\partial\:V_1\to V_0$ being an embedding.
\end{ex}

In order to form a category we also need to specify the morphisms.  These come in two versions.
\begin{defn}  Given two $H$-twisted Lie--Rinehart algebras $(\R_i,\E_i,[.,.]_i,\rho_i,H_i)$, $i=1,2$ together with their induced connections $\nabla$ and $\nabla'$, an $L_\infty$-morphism $(\Phi^*,\Phi_1^*,\Phi_2^*)$ of Lie--Rinehart algebras is a morphism of base rings $\Phi^*\:\R_1\to\R_2$, a morphism of modules $\Phi_1^*\:\E_1\to\E_2$ and $\Phi_2^*\:\wedge^2\E_1\to\ker\rho_2$, subject to the rules
\begin{align}
  \Phi^*(f)\cdot\Phi_1^*(\psi) &= \Phi_1^*(f\cdot\psi),\\
  \Phi^*(f_1f_2)\cdot\Phi_2^*(\psi_1,\psi_2) &= \Phi_2^*(f_1\cdot\psi_1,f_2\cdot\psi_2),\\
  \Phi_1^*[\psi_1,\psi_2]_1-[\Phi_1^*\psi_1,\Phi_1^*\psi_2]_2 &= \Phi_2^*(\psi_1,\psi_2), \\
  \Phi^*(\rho_1(\psi)[f])-\rho_2\circ\Phi_1^*(\psi)[\Phi^*f] &= 0,\\
  \Phi_1^*\circ H_1(\psi_1,\psi_2,\psi_3) -H_2\circ\wedge^3\Phi_1^*(\psi_1,\psi_2,\psi_3) &= \conn_{\Phi_1^*\psi_1}\Phi_2^*(\psi_2,\psi_3)
  \\&\quad-\Phi_2^*([\psi_1,\psi_2]_1,\psi_3)+\cycl\nonumber
\end{align}  For all $\psi_i\in\E_1$, $f_i\in\R_1$.

We call such an $L_\infty$-morphism strict iff $\Phi_2^*\equiv0$.
\end{defn}
Note that despite the notation we do not require the $\Phi_i^*$ to be transposes of any maps $\Phi_i$.
\begin{prop} The $H$-twisted Lie--Rinehart algebras together with $L_\infty$-morphisms form a category.  The composition law for two morphisms $\Phi^*\:(\R_2,\E_2)\to(\R_3,\E_3)$ and $\Psi^*\:(\R_1,\E_2)\to(\R_2,\E_2)$ is
\begin{align}  (\Phi^*\circ\Psi^*) :\R_1&\to \R_3, \\
  (\Phi^*\circ\Psi^*)_1 :\E_1&\to\E_3: \chi\mapsto \Phi_1^*\circ\Psi_1^*(\chi), \\
  (\Phi^*\circ\Psi^*)_2 :\wedge^2\E_1&\to\ker\rho_3 : (\chi_1,\chi_2)\mapsto \Phi_2^*\circ\wedge^2\Psi_1^*(\chi_1,\chi_2)+\Phi_1^*\circ\Psi_2^*(\chi_1,\chi_2)
\end{align} where $\chi_i\in\E_1$, $f\in\R_1$, and $\rho_3$ is the anchor map of $(\R_3,\E_3)$.

The strict morphisms for a subcategory.
\end{prop}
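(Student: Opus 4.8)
The plan is to establish the three ingredients of a category: that the triple $\Phi^*\circ\Psi^*$ defined by the displayed formulas is again an $L_\infty$-morphism, that this composition is associative, and that the evident triple $(\mathrm{id}_{\R},\mathrm{id}_{\E},0)$ is a two-sided identity; the assertion about strict morphisms is then read off. Throughout write $\Psi^*\:(\R_1,\E_1)\to(\R_2,\E_2)$ and $\Phi^*\:(\R_2,\E_2)\to(\R_3,\E_3)$, and denote by $\nabla$, $\nabla'$, $\nabla''$ the connections on $\ker\rho_1$, $\ker\rho_2$, $\ker\rho_3$ induced by the respective brackets, so $\nabla_\psi\phi=[\psi,\phi]$ and likewise for the others.

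The observation that organizes the whole computation is the following. By the fourth defining relation one checks that $\Phi_1^*$ carries $\ker\rho_2$ into $\ker\rho_3$, and $\Phi_2^*$ already has values there; hence the third relation, read on arguments $\phi\in\ker\rho_2$, becomes $\Phi_1^*(\nabla'_\psi\phi)=\nabla''_{\Phi_1^*\psi}(\Phi_1^*\phi)+\Phi_2^*(\psi,\phi)$ — that is, $\Phi_1^*$ intertwines the induced connections up to the explicit defect $\Phi_2^*$, and similarly for $\Psi^*$. The same two facts show that the formula for $(\Phi^*\circ\Psi^*)_2$ genuinely takes values in $\ker\rho_3$, so the composite triple makes sense as a candidate morphism.

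Next I would verify the five defining relations for $\Phi^*\circ\Psi^*$. The first, second and fourth involve only ring maps, module maps and anchors; substituting the corresponding relations first for $\Psi^*$ and then for $\Phi^*$ closes each immediately, and on the $(\cdot)^*$- and $(\cdot)_1^*$-parts the stated composition law is just composition of maps. For the third relation one expands
$$ (\Phi^*\circ\Psi^*)_1[\chi_1,\chi_2]_1 - [(\Phi^*\circ\Psi^*)_1\chi_1,(\Phi^*\circ\Psi^*)_1\chi_2]_3 $$
by applying the third relation for $\Psi^*$ inside $\Phi_1^*$ and then the third relation for $\Phi^*$ to the pair $(\Psi_1^*\chi_1,\Psi_1^*\chi_2)$; the two defects produced are exactly $\Phi_1^*\circ\Psi_2^*(\chi_1,\chi_2)$ and $\Phi_2^*\circ\wedge^2\Psi_1^*(\chi_1,\chi_2)$, i.e.\ $(\Phi^*\circ\Psi^*)_2(\chi_1,\chi_2)$. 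The delicate point — the step I expect to be the main obstacle — is the fifth relation, the $H$-coherence identity. Here I would start from $(\Phi^*\circ\Psi^*)_1\circ H_1-H_3\circ\wedge^3(\Phi^*\circ\Psi^*)_1$, insert and cancel the intermediate term $\Phi_1^*\circ H_2\circ\wedge^3\Psi_1^*$, apply the fifth relation for $\Psi^*$ (pushed through $\Phi_1^*$) and the fifth relation for $\Phi^*$ (on $\wedge^3\Psi_1^*$-arguments), and then convert every resulting $\nabla'$-term into a $\nabla''$-term by the intertwining identity above, using the Leibniz rule of $\nabla''$ to absorb the correction. Regrouping the cyclic sums leaves precisely
$$ \nabla''_{(\Phi^*\circ\Psi^*)_1\chi_1}(\Phi^*\circ\Psi^*)_2(\chi_2,\chi_3) - (\Phi^*\circ\Psi^*)_2([\chi_1,\chi_2]_1,\chi_3) + \cycl , $$
as required. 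Bookkeeping of the several cross-terms — two occurrences each of $\Phi_2^*$ and $\Psi_2^*$ entangled with the connection and with the cyclic sum — is the only part of the argument that requires real care.

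Associativity is then quick: on the $(\cdot)^*$- and $(\cdot)_1^*$-parts it is ordinary associativity of composition, while on the $(\cdot)_2^*$-part both $((\Phi^*\circ\Psi^*)\circ\Xi^*)_2$ and $(\Phi^*\circ(\Psi^*\circ\Xi^*))_2$ expand, using functoriality of $\wedge^2$, to
$$ \Phi_2^*\circ\wedge^2(\Psi_1^*\circ\Xi_1^*) + (\Phi_1^*\circ\Psi_2^*)\circ\wedge^2\Xi_1^* + \Phi_1^*\circ\Psi_1^*\circ\Xi_2^* . $$
For the identity triple $(\mathrm{id}_{\R},\mathrm{id}_{\E},0)$ each of the five defining relations collapses to a defining identity of the target $H$-twisted Lie--Rinehart algebra, so it is a morphism, and inserting it into the composition formula on either side returns $\Phi^*$, since $\wedge^2\mathrm{id}=\mathrm{id}$ and the term containing $0$ drops out. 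This establishes the category axioms. Finally, $\Phi_2^*\equiv0$ and $\Psi_2^*\equiv0$ force $(\Phi^*\circ\Psi^*)_2\equiv0$, and the identity is strict, so the strict morphisms are closed under composition and contain all identities; hence they form a subcategory.
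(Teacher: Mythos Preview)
The paper states this proposition without proof, so there is no argument of the author's to compare against; your outline supplies exactly what is missing. You organize the verification around the five defining relations for the composite triple, then handle associativity, identities, and closure of strict morphisms, and each step proceeds as you indicate --- in particular the check of the third relation and the cyclic bookkeeping for the fifth ($H$-coherence) relation are the right computations.

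One point deserves more care than you give it. You assert that ``by the fourth defining relation one checks that $\Phi_1^*$ carries $\ker\rho_2$ into $\ker\rho_3$''; but the fourth relation only tells you that $\rho_3(\Phi_1^*\psi)$ annihilates the image $\Phi^*(\R_2)\subset\R_3$, not all of $\R_3$. For a general ring morphism $\Phi^*$ this does not force $\rho_3(\Phi_1^*\psi)=0$, yet you need $\Phi_1^*(\ker\rho_2)\subset\ker\rho_3$ for the formula $(\Phi^*\circ\Psi^*)_2$ to genuinely land in $\ker\rho_3$. In the smooth setting (and for projective modules with the obvious pointwise interpretation) this is harmless, and the paper evidently takes it for granted; in the full generality of Lie--Rinehart algebras it is either an implicit extra hypothesis on morphisms or a small gap in the definition itself rather than in your argument. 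Apart from this, your proof is complete and correct.
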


The result is that an $L_\infty$-morphism of $H$-twisted Lie algebroids is the following.
\begin{defn}  Let $(E_i\to M_i,[.,.]_i,\rho_i,H_i)$, $i=1,2$ be two $H$-twisted Lie algebroids.  A morphism between them is a triple of maps $\Phi\:M_2\to M_1$, $\Phi_1\:E_2^*\to E_1^*$, and $\Phi_2:(\ker\rho_2)^*\to\wedge^2E_1^*$, such that their transpose $(\Phi^*,\Phi_1^*,\Phi_2^*)$ form an $L_\infty$-morphism of Lie--Rinehart algebras.  We call it strict morphism iff $\Phi_2\equiv0$.
\end{defn}
\begin{cor}  Also the $H$-twisted Lie algebroids together with $L_\infty$-morphisms form a category.  Moreover the strict morphisms form a subcategory.
\end{cor}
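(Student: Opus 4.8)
The plan is to deduce this from the preceding Proposition by transport of structure along the assignment that sends an $H$-twisted Lie algebroid $(E\to M,[.,.],\rho,H)$ to the $H$-twisted Lie--Rinehart algebra $(\smooth(M),\Gamma(E),[.,.],\rho,H)$ of the Example above. This assignment is faithful on objects by Serre--Swan: from the $\smooth(M)$-module $\Gamma(E)$ one reconstructs the finite-rank vector bundle $E$, and the bracket, the anchor and the twist $H$ are then literally the same data. On morphisms it is faithful because, by \emph{definition}, a morphism of $H$-twisted Lie algebroids is a triple $(\Phi,\Phi_1,\Phi_2)$ consisting of a smooth map and two vector-bundle maps over it whose transpose $(\Phi^*,\Phi_1^*,\Phi_2^*)$ is an $L_\infty$-morphism of the associated Lie--Rinehart algebras, and the transpose operation is injective: a vector-bundle map over a fixed base map is determined by the induced map on global sections, since sections of the dual bundle separate fibres.

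Next I would set up composition and identities on the geometric side and check they are well posed. Given geometric morphisms $(\Psi,\Psi_1,\Psi_2)$ from algebroid $1$ to $2$ and $(\Phi,\Phi_1,\Phi_2)$ from $2$ to $3$, transpose them, form their composite as $L_\infty$-morphisms of Lie--Rinehart algebras -- available by the Proposition -- and read the resulting triple back through the transpose dictionary: the base component is the composite smooth map $M_3\to M_1$, the first component comes from the composite of the two vector-bundle maps, and the third component is the transpose of the composition formula $\Phi_2^*\circ\wedge^2\Psi_1^*+\Phi_1^*\circ\Psi_2^*$ of the Proposition, which is again a vector-bundle map into $\wedge^2E_1^*$ over that base map. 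The triple so obtained is by construction a geometric morphism -- its transpose is the $L_\infty$-composite and hence satisfies the five morphism axioms -- so composition is well defined; the identity is $(\mathrm{id}_M,\mathrm{id}_{E^*},0)$, whose transpose is visibly the identity $L_\infty$-morphism. Associativity and the unit laws then follow from the corresponding statements in the category of $H$-twisted Lie--Rinehart algebras, because the transpose is injective and intertwines composition and identities.

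For the final sentence, a strict morphism is one with $\Phi_2\equiv0$, equivalently with vanishing $\Phi_2^*$; identities are strict, and the composition formula above shows that the third component of a composite of two strict morphisms vanishes, since each of its two summands carries a factor $\Phi_2^*$ or $\Psi_2^*$. Hence the strict morphisms contain the identities and are closed under composition, so they form a subcategory. The same argument, together with the standard facts that $\smooth$-algebra morphisms of (second countable) manifolds are pullbacks and that module morphisms over them are section transposes of vector-bundle comorphisms, shows in fact that the $H$-twisted Lie algebroids sit inside the $H$-twisted Lie--Rinehart algebras as a full subcategory.

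I expect the main obstacle to be the bookkeeping of this transpose dictionary in the presence of the possibly singular ``bundle'' $\ker\rho$: one must check that the twisted bilinearity axiom on $\Phi_2^*$ forces it to be tensorial, hence the transpose of a bundle map into $\wedge^2E_1^*$, and -- the delicate point -- that the module component $\Phi_1^*$ corestricts along $\ker\rho$, so that the summand $\Phi_1^*\circ\Psi_2^*$ of the composite really takes values in $\Gamma(\ker\rho_3)$ and the anchor-morphism axiom survives composition. This compatibility is already implicit in the Proposition; granting it, the transport of structure goes through unobstructed.
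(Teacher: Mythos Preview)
Your approach is correct and is exactly what the paper intends: the Corollary is stated without proof, as an immediate consequence of the preceding Proposition via the transpose dictionary built into the very definition of algebroid morphisms. You have simply spelled out the transport-of-structure argument that the paper leaves implicit, including the check that strict morphisms are closed under composition; the caveat you flag about $\Phi_1^*$ preserving $\ker\rho$ is indeed a point belonging to the Proposition rather than to this Corollary.
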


\section{Splittable H-twisted Lie algebroids}
\subsection{Motivation from PQ-manifolds}
The author's motivation to investigate an H-twist came from a generalization of \cite{Royt02} and \cite{GSr10} to degree 3 PQ graded manifolds.  For a short introduction to graded manifolds, see e.g.\ \cite[Chap.~2.2]{Gru09}, \cite[Sect.~2]{Royt02}, \cite[Sect.~2]{Sev01b}, or \cite[Sect.~4]{Vor01}.

Remember the Euler vector field $\e$ of a graded manifold $\M$.  Its eigenfunctions are the homogeneous functions on $\M$.

\begin{prop}\label{p:PQ3}  A symplectic N-manifold of degree 3 has an exact symplectic form $\omega=\ud(\tfrac13\im_\e\omega)=\ud \theta_i\wedge \ud x^i +\ud b_a\wedge\ud\xi^a$\footnote{$x^i$ of degree 0, $\xi^a$ of degree 1, $b_a$ symplectic duals of $\xi^a$ and of degree 2, and $\theta_i$ the symplectic duals of the $x^i$ and of degree 3} and locally the structure $\M\approx T^*[3]E[1]$.

It fits in the short exact sequences \[ 0\times M\to T^*[3]M\xrightarrow{i_p} \M\xrightarrow{p} \E\to 0\times M \;\] and $$0\times M\to E^*[2]\xrightarrow{i_q} \E\xrightarrow{q} E[1]\to 0\times M$$ of pointed graded fiber bundles over $M$ the body of $\M$.  \qed
\end{prop}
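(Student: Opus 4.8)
The plan is to follow Roytenberg's strategy for degree-2 symplectic N-manifolds and push it one degree higher. First I would invoke the general structure theorem for N-manifolds as towers of affine fibrations (cited from \cite{Royt02}): a degree-3 N-manifold $\M$ has a body $M$, a degree-1 part which is (the shifted sheaf of sections of) a vector bundle $E[1]\to M$, a degree-2 part, and a degree-3 part. The Euler vector field $\e$ acts on the structure sheaf with eigenvalues equal to the degree, so the homogeneous functions of each degree are identified: degree $0$ are $\smooth(M)$, degree $1$ are $\Gamma(E^*)$, and so on. This gives the projection $q\:\E\to E[1]$ (remembering the author's convention $\M\approx\E$ as graded bundles over $M$) by quotienting out everything of degree $\ge 2$, with kernel the degree-$\ge2$ part; I would then identify that kernel as $E^*[2]$ once the symplectic form is brought in, yielding the second short exact sequence. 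Likewise the top-degree (degree-3) fiber coordinates $\theta_i$ pair with the base coordinates, giving $T^*[3]M$ as a sub-bundle, hence the projection $p\:\M\to\E$ with section-complement $i_p$ and the first short exact sequence.

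Next I would treat the symplectic form. Since $\e$ has degree $0$ and $\omega$ is homogeneous of degree $3$ (the canonical normalization for a degree-$3$ symplectic structure), Cartan's magic formula gives $\mathcal L_\e\omega = 3\omega$, and because $\omega$ is closed, $3\omega = \mathcal L_\e\omega = \ud\,\im_\e\omega$, so $\omega = \ud(\tfrac13\im_\e\omega)$ is exact with the stated primitive $\theta := \tfrac13\im_\e\omega$. Then I would choose Darboux-type local coordinates: local base coordinates $x^i$ of degree $0$, a local frame $\xi^a$ of $E^*$ giving degree-$1$ coordinates, their symplectic partners $b_a$ which must then carry degree $3-1=2$, and the symplectic partners $\theta_i$ of $x^i$ carrying degree $3-0=3$. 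One checks that in degree $3$ the only homogeneous functions available after fixing the lower strata are linear in the $\theta_i$ and $\xi^a b_c$-type products, so the symplectic form can be written in the canonical split form $\omega = \ud\theta_i\wedge\ud x^i + \ud b_a\wedge\ud\xi^a$; this simultaneously pins down the local model $\M\approx T^*[3]E[1]$, since $T^*[3]E[1]$ has exactly coordinates $(x^i,\xi^a,b_a,\theta_i)$ in degrees $(0,1,2,3)$ with precisely this canonical form.

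Finally I would check exactness of the two sequences. For $0\times M\to E^*[2]\xrightarrow{i_q}\E\xrightarrow{q}E[1]\to 0\times M$: $q$ is surjective by construction (it is the projection to the degree-$\le1$ truncation, which locally retains $x^i,\xi^a$), and its kernel is the pointed sub-bundle generated by coordinates of degree $\ge2$; the symplectic pairing identifies the degree-$2$ coordinates $b_a$ with a basis of $E[2]$ — wait, more precisely with $E^*[2]$ after using the frame — so $\ker q \cong E^*[2]$ as pointed graded bundles over $M$, giving exactness. For $0\times M\to T^*[3]M\xrightarrow{i_p}\M\xrightarrow{p}\E\to 0\times M$: the degree-$3$ fiber coordinates $\theta_i$ that are the symplectic duals of the $x^i$ span a sub-bundle isomorphic to $T^*[3]M$ (the $\ud\theta_i\wedge\ud x^i$ block of $\omega$ realizes exactly the canonical pairing of $T^*M$), $p$ quotients these out, and what remains is $\E$. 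Gluing: all the above is local, so I must check the transition functions respect the decomposition — the degree-$3$ coordinates can mix with products of lower-degree ones under coordinate changes, so $T^*[3]M\hookrightarrow\M$ is canonical but the complementary splitting is not, which is exactly why the sequence is only short exact and not canonically split; this is harmless for the statement.

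The main obstacle I anticipate is the bookkeeping in degree $3$: unlike the degree-$2$ case, a degree-$3$ function is not simply a section of one bundle but involves the top coordinates together with products like $\xi^a b_c$ and cubic products $\xi^a\xi^b\xi^c$, so writing $\omega$ in the claimed canonical form requires a careful Darboux-type argument (or appeal to Roytenberg's normal form / the "Roytenberg trick" mentioned in the introduction) showing that a change of the degree-$2$ and degree-$3$ coordinates kills all the off-canonical terms. Once that normal form is in hand, the two exact sequences are essentially tautological from the tower-of-affine-fibrations picture, and the symplectic identification $\ker q\cong E^*[2]$ is just the statement that $\ud b_a\wedge\ud\xi^a$ is a perfect pairing between the degree-$2$ and degree-$1$ strata.
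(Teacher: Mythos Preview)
Your proposal is correct and matches the paper's approach: the paper gives no proof beyond a \qed and the remark ``This is an observation by Roytenberg in \cite{Royt02}'', so reconstructing Roytenberg's argument (Cartan's formula with the Euler field for exactness of $\omega$, graded Darboux coordinates for the local model $T^*[3]E[1]$, and the tower-of-affine-fibrations picture for the two exact sequences) is exactly what is intended. One small slip to fix: your parenthetical ``remembering the author's convention $\M\approx\E$'' is wrong --- in the paper $\E$ is the degree-$\le2$ quotient of $\M$ (the target of $p$ in the first sequence), not $\M$ itself, so $\ker q\subset\E$ really is just the degree-$2$ stratum $E^*[2]$; this does not break your argument but you should keep the two spaces distinct.
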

This is an observation by Roytenberg in \cite{Royt02}.  Canonical coordinate changes read as:
\begin{align*}
  \tilde{x}^j &= \tilde{x}^j(x), \\
  \tilde{\xi}^a &= M^{\tilde a}_b(x)\xi^b, \\
  \tilde{b}_a &= (M^{-1})^b_{\tilde{a}}b_b +\tfrac12 R_{\tilde{a}bc}(x)\xi^a\xi^b, \\
  \tilde{\theta}_i &= \pfrac{x^j}{\tilde{x}^i}\left(\theta_j- M_{a,j}^{\tilde{c}}(M^{-1})^b_{\tilde c}\xi^a b_b +\tfrac16R_{\tilde{d}bc,j}M^{\tilde{d}}_a\xi^a\xi^b\xi^c  \right),
\intertext{where $R\in\Eforms[3]$ has to fulfill}
  R_{\tilde{a}bc} &:= (M^{-1})^d_{\tilde a}R_{dbc}, \\
  0 &= M^{\tilde e}_{a],[[k}R_{\tilde{e}bc],[[l} +2M^{\tilde e}_{a],[[k}(M^{-1})^f_{\tilde e}R_{\tilde{g}f[c,l]]}M^{\tilde{g}}_{b]} +M^{\tilde e}_{[a,[[k}R_{\tilde{e}[bc,[[l}
\end{align*}  and $[\dots]$ and $[[\dots]]$ mean skew-symmetrization in the embraced indices.

\begin{lemma}  The above short exact sequences permit splittings (in the category of pointed fiber bundles).  A splitting $i\:\E\to\M$ induces a map $p_i\:\M\to T^*[3]M$, and a splitting $j\:E[1]\to\E$ induces a map $p_j\:\E\to E^*[2]$.
\begin{align}
   0\times M\leftarrow T^*[3]M\xleftarrow{p_i} \M\xleftarrow{i} \E\leftarrow 0\times M  \\
   0\times M\leftarrow E^*[2]\xleftarrow{p_j} \E\xleftarrow{j} E[1]\leftarrow 0\times M \nonumber
\end{align}

A change of the splitting $j$ is a section $R\in\Esforms[1]{E}$ which modifies $p_j^*$ to $\psi\mapsto p_j^*(\psi)+R(\psi)$.  A change of the splitting $i$ for fixed splitting $j$ is a section $L\in\Omega^1_M(TM,\wedge^3E^*\oplus E^*\otimes E)$ which modifies $p_i^*$ to $X\mapsto p_i^*(X)+L(X)$ for $X\in\Gamma(TM)$.
\end{lemma}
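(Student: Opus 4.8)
The plan is to read everything off Roytenberg's local normal form $\M\approx T^*[3]E[1]$ and the coordinate transformations recalled just above, which present both sequences as affine fibrations over $M$. In the adapted coordinates the fibre coordinates of $p\:\M\to\E$ are the degree-$3$ functions $\theta_i$, and $\tilde\theta_j$ depends on the $\theta_i$ affinely with linear part $\partial x^i/\partial\tilde x^j$, the transition cocycle of $T^*M$; hence $p$ is an affine bundle modelled on the pullback of $T^*[3]M$ to $\E$. Likewise $\tilde b_a$ is affine in the $b_b$ with linear part $(M^{-1})^b_{\tilde a}$, the cocycle of $E^*$, so $q\:\E\to E[1]$ is an affine bundle modelled on the pullback of $E^*[2]$ to $E[1]$. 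An affine bundle over a base admitting partitions of unity has a global section: take a trivialising atlas, use the zero section ($\theta_i\equiv0$, resp.\ $b_a\equiv0$) in each chart, and glue by a partition of unity pulled back from the body $M$, using that an affine combination of local sections of an affine bundle is again a section. Each local zero section, and hence the glued section, sends the distinguished section of the base to that of the total space, so the resulting $i$ and $j$ are morphisms of pointed graded fibre bundles splitting $p$ and $q$; this is the first assertion.

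Next, the splitting $i$ trivialises the affine bundle $p$, identifying $\M$ with its model vector bundle, and the fibre projection of this identification is the asserted retraction $p_i\:\M\to T^*[3]M$. Equivalently, $p_i^*$ sends a fibre-linear function on $T^*[3]M$ — these are $\Gamma(TM)$, placed in degree $3$ — to the unique function on $\M$ that agrees with it fibrewise and vanishes on $i(\E)$, so $p_i^*(X)=X^i\theta_i$ in $i$-adapted coordinates. The same construction applied to $j$ gives $p_j\:\E\to E^*[2]$ with $p_j^*(\psi)=\psi^a b_a$.

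Finally, two sections of an affine bundle differ by a section of its model vector bundle, and this difference is exactly what twists the induced retraction: shifting $j$ by a section $R$ of $q$'s model bundle replaces $p_j^*$ by $\psi\mapsto p_j^*(\psi)+R(\psi)$, and shifting $i$ for $j$ held fixed by a section $L$ of $p$'s model bundle replaces $p_i^*$ by $X\mapsto p_i^*(X)+L(X)$; it then remains to name these two model bundles as spaces of sections. A short degree count in the coordinates $x,\xi$ on $E[1]$ identifies the model of $q$ with $\Esforms[1]{E}$. For $p$ with $j$ held fixed the base $\E$ is identified with $E[1]\oplus E^*[2]$, so its degree-$3$ functions decompose canonically as $\Gamma(\wedge^3E^*)\oplus\Gamma(E^*\otimes E)$ — which is precisely why the splitting $j$ has to be fixed — and tensoring with $T^*M$ for the index of $\theta_i$ turns this into $L\in\Omega^1_M(TM,\wedge^3E^*\oplus E^*\otimes E)$. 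The main obstacle is not hard analysis but bookkeeping: making precise what a splitting in the category of pointed graded fibre bundles is, checking that the partition-of-unity construction stays inside that category, and correctly enumerating the degree-$k$ functions on $\E$ and $E[1]$ together with the twist they induce on $p_i^*$ and $p_j^*$.
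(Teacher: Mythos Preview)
Your argument is correct and is essentially a detailed unpacking of the paper's one-line proof, which reads in its entirety ``The fibers are contractible.'' You make explicit what that sentence leaves implicit: the coordinate transformations exhibit $p$ and $q$ as affine bundles, affine bundles over a paracompact base admit sections via partitions of unity (pulled back from the body $M$), and the rest of the statement---the induced retractions $p_i,p_j$ and the parametrisation of changes of splitting---is the standard affine-bundle bookkeeping, which the paper does not spell out at all.
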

\begin{proof} The fibers are contractible.
\end{proof}


\begin{ex}\label{ex:1} The Q-structure $\Q$ of a PQ3 manifold has the following components:
\begin{gather}\begin{split}
  \Q &= \rho^i_a(x) \theta_i\xi^a +\tfrac12C_{ab}^c(x)\xi^a\xi^b b_c +\tfrac1{4!}h_{abcd}(x)\xi^a\xi^b\xi^c\xi^d +\tfrac12B^{ab}(x) b_a b_b
\end{split}
\intertext{i.e.\ a map}
 \rho\: E \to TM : \rho(\psi)[f] = \{\,\{\Q,p_j^*\psi\},f\} \;,
\intertext{a symmetric 2-vector}
 B\in\Gamma(S^2E) : B(\alpha,\beta)=\{\{\Q,q^*\alpha\},q^*\beta\} \;,
\intertext{an E4-form}
 h \in \Eforms[4] : h(\psi_1,\psi_2,\psi_3,\psi_4)=\{\{\{\{\Q, p_j^*\psi_1\}, p_j^*\psi_2\}, p_j^*\psi_3\}, p_j^*\psi_4\} \;,
\intertext{and a bracket}\label{dskewBr}
 [.,.]\:\Gamma(E)\otimes\Gamma(E)\to \Gamma(E) : [\phi,\psi] = i_q^*\{\{\Q,p_j^*\phi\},p_j^*\psi\}\;.
\end{gather}
The nilpotence $\{\Q,\Q\}$ is equivalent to
\begin{align}  2\rho^i_{[a}\rho^j_{b],i} &= \tfrac12\rho^j_c C_{ab}^c & [\rho(\phi),\rho(\psi)]&=\rho[\phi,\psi]  \\
  \rho^i_{[a}C^d_{bc],i}+C_{ab]}^eC_{e[c}^d +\tfrac1{3!}&h_{abce}B^{ed} = 0 \label{Jac}\\
    [\phi,[\psi_1,\psi_2]] -[[\phi,\psi_1],\psi_2]& -[\psi_1,[\phi,\psi_2]]& = B^\#\circ h(\phi,&\,\psi_1,\psi_2)  \nonumber\\
  \tfrac1{4!}\rho^i_{[a}h_{bcde],i} +\tfrac1{12}C_{ab]}^f h_{f[cde} &= 0 &  \uD h &=0 \\
  \rho^i_a B^{bc}_{,i}+C_{ad}^{b)}B^{d(c} &= 0 &  \uD B &= 0  \\
  \rho^i_a B^{ab} &= 0 &  \rho\circ B^\# &= 0
\end{align}  where $\phi,\psi,\psi_i\in\Gamma(E)$
\end{ex}
\begin{proof}  In order to show, e.g., \eqref{Jac} use adapted coordinates such that $p_j^*(\theta_a)=\theta_a$ and note that the derived bracket encodes:
\begin{align*}
 [\phi,\psi] &:=i_q^*\{\{\Q,p_j^*\phi\}, p_j^*\psi\} = \{\{\Q,p_j^*\phi\},p_j^*\psi\}-p^*q^*h(\phi,\psi)
\end{align*}  Then use computation in these coordinates.
\end{proof}

\begin{ex}[Cotangent structure of a Courant algebroid]\label{ex:T^*EE}  Due to Roytenberg \cite{Royt02} a Courant algebroid structure on the vector bundle $A\to M$ is in one-to-one correspondence to a cubic Hamiltonian $\Theta_A$ on the symplectic realization $\A:=A[1]\times_{(A\oplus A^*)[1]}T^*[2]A[1]$.  The Dorfman bracket can be expressed as a derived bracket.  Analog to Roytenberg's example $T^*[2]T[1]M$ one can also consider $\M:=T^*[3]\A$ with a Hamiltonian lift of the $Q$-structure.  The role of $E$ in Proposition~\ref{p:PQ3} is played by a vector bundle $E$ that fits into the short exact sequence of vector bundles over $M$
 $$ 0\times M\to TM \to E \xrightarrow{\pi} A \to 0\times M $$ where $\pi$ is the projection $T^*[3]\A\to\A$.
If one follows the steps of Example~\ref{ex:1}, after choosing a splitting of the short exact sequences, one recovers an H-twisted Lie algebroid on $E$.  Note that the lifted Q-structure mixes the $A$ and the $TM$-component in $E$, however $\Gamma(A)$ is a subalgebra with an almost Lie algebroid bracket deriving from the Courant bracket and a compatible connection.  In coordinates -- $\xi^i$ on $T[1]M$, $b_i$ on $T^*[2]M$, and $\theta_i$ on $T^*[3]M$ -- the Q-structures read as follows:
\begin{align*}
  \Theta_A =& \rho^i_a(x)\xi^a b_i +\tfrac16C_{abc}(x)\xi^a\xi^b\xi^c \\
  Q_A =& \rho^i_a\xi^a\pfrac{}{x^i}+\rho^i_ag^{ab}b_i\pfrac{}{\xi^b}+\rho^i_{a,j}\xi^a b_i\pfrac{}{b_j} +\tfrac12C_{abc}g^{cd}\xi^a\xi^b\pfrac{}{\xi^d} \\&+\tfrac16C_{abc,i}\xi^a\xi^b\xi^c\pfrac{}{b_i}
\intertext{and the Hamiltonian lift}
  \Theta =& \rho^i_a\xi^a\theta_i +\rho^i_ag^{ab}b_i b_b +\rho^i_{a,j}\xi^a b_i \xi^j +\tfrac12C_{abc}g^{cd}\xi^a\xi^b b_d +\tfrac16 C_{abc,i}\xi^a\xi^b\xi^c \xi^i
\intertext{Therefore the symmetric bivector $B$ and the $E$-4-form $h$ compute as}
  B\:&E^*_x\cdot E^*_x\to \RR : (\alpha,\beta) \mapsto \rho^i_ag^{ab}(\alpha_i\beta_b+\alpha_b\beta_i), \\
  h(&\phi,\psi,\chi,\nu) = C_{abc,i}(\phi^a\psi^b\chi^c\nu^i+\alt),
\intertext{The anchor map factors through $\pi$}
  \rho_E\:&E\to TM : \psi \mapsto \rho_A(\pi(\psi)) \;,
\intertext{and the skew-symmetric bracket in coordinates is}
  [\phi,\psi] &= i_q^*\{\{\Theta,\phi\},\psi\} = \rho(\phi)[\psi^a]b_a +C_{ab}^c\phi^a\psi^b b_c -\rho(\psi)[\phi^a]b_a \\&\quad\quad+\rho(\phi)[\psi^i]b_i +\rho^i_{a,j}(\phi^a\psi^j -\phi^j\psi^a)b_i -\rho(\psi)[\phi^i]b_i \;.
\end{align*}   Note that this bracket fulfills Leibniz rule and has Jacobiator $B^\#\circ\tilde{h}$.
\end{ex}

\subsection{The splittable case}
\begin{defn}  We call an H-twisted Lie algebroid $(E,\rho,[.,.],H)$ splittable if there exists a $\uD$-closed E4-form $h\in\Eforms[4]$ and a $\uD$-closed symmetric 2-vector $B\in\Esforms[0]{S^2\ker\rho}=\Gamma(S^2\ker\rho)$ such that
 $$ H = B^\#\circ \tilde{h} \;. $$
\end{defn}
The above Example~\ref{ex:1} is obviously splittable, however Example~\ref{ex:rk3} cannot be split for degree reasons.  In general we arrive at the following theorem.

\begin{thm}\label{thm:PQ3} An H-twisted Lie algebroid can be written as in Example~\ref{ex:1} iff it is splittable.

There is a one-to-one correspondence between split H-twisted Lie algebroids and PQ3-manifolds with splitting.
\end{thm}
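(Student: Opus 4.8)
The strategy is to package Example~\ref{ex:1} into a bijection, using Proposition~\ref{p:PQ3} and the subsequent lemma on splittings for the structural input; I would argue in both directions.

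\emph{From a PQ3-manifold with splitting.} Let $\M$ be a symplectic N-manifold of degree~$3$ equipped with splittings $i$ and $j$ of the two short exact sequences of Proposition~\ref{p:PQ3}; these furnish a vector bundle $E\to M$ over the body, an identification $\M\cong T^*[3]E[1]$, and the invariantly defined maps $p_j^*$, $q^*$, $i_q^*$ of Example~\ref{ex:1}. By the Roytenberg trick $Q$ is Hamiltonian with a homogeneous Hamiltonian $\Theta$ of degree~$4$, and a degree count on $T^*[3]E[1]$ leaves exactly the four monomials displayed in Example~\ref{ex:1}; the derived-bracket formulas there then define an anchor $\rho\:E\to TM$, a skew bracket $[.,.]$ on $\Gamma(E)$, an $h\in\Eforms[4]$ and a $B\in\Gamma(S^2E)$, all independent of the chosen adapted coordinates. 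By the computation carried out in the proof of Example~\ref{ex:1}, $\{\Theta,\Theta\}=0$ is equivalent to the five identities there: $\rho$ is a bracket morphism; the Jacobiator of $[.,.]$ equals $B^\#\circ\tilde h$ (cf.\ \eqref{Jac}); $\uD h=0$; $\uD B=0$; and $\rho\circ B^\#=0$.

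Now $\rho\circ B^\#=0$ says $B\in\Gamma(S^2\ker\rho)$, so $H:=B^\#\circ\tilde h$ lies in $\Esforms[3]{\ker\rho}$ and the exterior covariant derivative $\uD$ of Section~2 is available; the Leibniz rule~\eqref{Leibniz} is automatic for the derived bracket, and with this $H$ the Jacobiator identity is axiom~\eqref{Jacobi}. There remains the closedness axiom~\eqref{Hclosed}, $\uD H=0$, which is not on the list; I would deduce it from $\uD h=0$ and $\uD B=0$ by a Leibniz-type identity for $\uD(B^\#\circ\tilde h)$ (using that $\uD B=0$ makes $B^\#$ compatible with the connections involved and that $\uD$ restricted to scalar $E$-forms is the almost-Lie-algebroid differential killing $h$), or more cheaply from $\uD^2=\Ht$ together with $\uD^3=\uD\Ht=\Ht\uD$, which forces $\widetilde{\uD H}=0$ and hence $\uD H=0$. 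Thus $(E,\rho,[.,.],H)$ is an $H$-twisted Lie algebroid and the $\uD$-closed $h$, $B$ exhibit it as a split $H$-twisted Lie algebroid.

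\emph{The converse and mutual inverseness.} Given a split $H$-twisted Lie algebroid -- data $(E,\rho,[.,.],H)$ together with $\uD$-closed $h\in\Eforms[4]$ and $\uD$-closed $B\in\Gamma(S^2\ker\rho)$ with $H=B^\#\circ\tilde h$ -- I set $\M:=T^*[3]E[1]$ with its canonical exact symplectic form, which carries tautological splittings, hence is a PQ3-manifold with splitting, and define $\Theta$ by the formula of Example~\ref{ex:1}. The five equations hold by Lemma~\ref{l:rho}, axiom~\eqref{Jacobi}, the two closedness hypotheses, and $B\in\Gamma(S^2\ker\rho)$; hence $\{\Theta,\Theta\}=0$ and $Q=\{\Theta,-\}$ is a compatible odd nilpotent vector field. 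The two constructions are mutually inverse: a homogeneous degree-$4$ Hamiltonian on $T^*[3]E[1]$ is reconstructed from the four intrinsic objects it determines once the splitting is fixed, and these are precisely the split $H$-twisted Lie algebroid data. The first assertion of the theorem is then immediate, since ``writable as in Example~\ref{ex:1}'' means admitting such a nilpotent $\Theta$. I expect the main obstacle to be the bookkeeping in the first step -- checking that with both splittings fixed the decomposition of $\Theta$ and the derived-bracket formulas really yield globally well-defined tensors on $E$ (this is what Proposition~\ref{p:PQ3}, the canonical coordinate changes listed after it, and the splitting lemma are for) -- together with the one genuine computation hidden in the proof of Example~\ref{ex:1}, that $\{\Theta,\Theta\}=0$ unpacks into exactly those five identities; establishing $\uD H=0$ is a secondary point.
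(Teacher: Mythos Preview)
Your proposal is correct and, in fact, more explicit than the paper: Theorem~\ref{thm:PQ3} is stated without proof, immediately after the definition of ``splittable'' and the remark that Example~\ref{ex:1} is ``obviously splittable,'' the intent being that the reader assemble the bijection from Proposition~\ref{p:PQ3}, the splitting lemma, and the nilpotence computation of Example~\ref{ex:1}. Your two directions and the mutual-inverseness check do exactly this, so you are on the paper's route.

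You also address a point the paper passes over in silence: going from a PQ3-manifold with splitting to an $H$-twisted Lie algebroid one must still verify axiom~\eqref{Hclosed}, which is not among the five identities equivalent to $\{\Q,\Q\}=0$. Your ``cheap'' argument is valid---Proposition~\ref{p:curv} uses only Lemma~\ref{l:rho} and~\eqref{Jacobi}, both available from the five identities, so $\uD^2=\Ht$ holds; then $\uD\Ht=\Ht\uD$ together with the lemma $\uD\Ht-\Ht\uD=\widetilde{\uD H}$ inside Theorem~\ref{thm:ncoh} gives $\widetilde{\uD H}=0$, and testing on arbitrary $\alpha\in\Gamma(E^*)$ (where $\widetilde{\uD H}(\alpha)=-\alpha\circ\uD H$) forces $\uD H=0$. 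The Leibniz-type alternative also goes through once one observes that $\uD B=0$ means $B$ is $\Enabla$-parallel, so $B^\#$ intertwines the relevant covariant derivatives.
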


\begin{defn}  We define the \emph{split cohomology} of a splittable H-twisted Lie algebroid as the cohomology of $\gsmooth(\M)$ under the differential $Q:=\{\Q,.\}$.
\end{defn}
\begin{rem}  In analogy to the Courant algebroid the lowest orders of cohomology have the following interpretations: $H^0(E)$ are the smooth functions on $M$ that are constant along the integral leaves of the image of $\rho$.  $q^*H^1(E)$ are the $\uD$-closed E-1-forms modulo $\uD$-exact 1-forms. $H^2(E)$ are the infinitesimal automorphisms of the split $E$ modulo the inner automorphisms $\ELie_\psi:=[\psi,.]$ (for $\psi\in\ker\Ht\cap\Gamma(E)$).  $H^3(E)$ are the obstructions of extending an infinitesimal automorphism to a formal one.
\end{rem}

\begin{cor}\label{c:tc}  Given a split $H$-twisted Lie algebroid there is a complex
 \[  0\to T^*M \xrightarrow{\rho^T} E^* \xrightarrow{B^\#} E \xrightarrow{\rho} TM \to 0 \;.
 \]
\end{cor}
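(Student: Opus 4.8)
The plan is to check that the diagram is a complex by verifying directly that the two interior compositions vanish; since the statement does not assert exactness, and the flanking arrows $0\to T^{*}M$ and $TM\to 0$ are trivial, nothing else is needed. Both vanishings reduce to the single fact, built into the definition of a splittable $H$-twisted Lie algebroid, that the symmetric $2$-vector $B$ takes values in $\ker\rho$ (equivalently $\rho\circ B^{\#}=0$, as already recorded in Example~\ref{ex:1}). In particular the $\uD$-closedness of $B$ and the E-$4$-form $h$ play no role in this corollary.

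First I would record the middle composition: for $\beta\in\Gamma(E^{*})$ one has $B^{\#}(\beta)=B(\beta,\cdot)\in\Gamma(\ker\rho)$, whence $\rho\circ B^{\#}=0$. Then, for the left composition, I would take $\xi\in\Gamma(T^{*}M)$ and $\beta\in\Gamma(E^{*})$ and compute, using the symmetry of $B$ and the defining property $\<v,\rho^{T}\xi\>=\<\rho(v),\xi\>$ of the transposed anchor,
$$ \<B^{\#}(\rho^{T}\xi),\beta\> = B(\rho^{T}\xi,\beta) = B(\beta,\rho^{T}\xi) = \<B^{\#}\beta,\rho^{T}\xi\> = \<\rho(B^{\#}\beta),\xi\> = 0, $$
the last equality being the vanishing just established. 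Since $\beta$ is arbitrary this gives $B^{\#}\circ\rho^{T}=0$, so $0\to T^{*}M\xrightarrow{\rho^{T}}E^{*}\xrightarrow{B^{\#}}E\xrightarrow{\rho}TM\to 0$ is a complex.

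The argument is purely formal, so I do not expect any genuine obstacle; the only point requiring slight care is the bookkeeping of dual modules, in particular the identification $E^{**}\cong E$ under which the symmetry of $B$ is exactly the relation $(B^{\#})^{T}=B^{\#}$ that makes the two vanishings mutually equivalent. For context I would append one sentence, following Roytenberg's observation, that under Theorem~\ref{thm:PQ3} this complex is the tangent complex of the corresponding PQ3-manifold at a point of the body $M$ — the four terms arising, up to duality, from the graded pieces of the tangent space of the local model $T^{*}[3]E[1]$ — but the direct verification above is self-contained.
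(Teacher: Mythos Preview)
Your proof is correct and matches the paper's own argument: the paper likewise observes that $B\in\Gamma(S^2\ker\rho)$ gives $\rho\circ B^\#=0$ and then obtains $B^\#\circ\rho^T=0$ by transposing and using the symmetry of $B$ (written there as $0=B^{\#T}\circ\rho^T=B^\#\circ\rho^T$). The paper frames the result first as the tangent complex of the associated Q-manifold, with $[Q,Q]=0$ as the abstract reason it is a complex, which is exactly the contextual remark you already append.
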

\begin{proof}  This is the so-called tangent complex of a Q-manifold. The vector spaces are the lowest degree tangent spaces in the sequence
 $$ \M \to \E\to E[1]\to M $$ and the map is the commutator with $Q$ which turns out to be $\smooth(M)$-linear.  The abstract reason why the sequence is a complex is that $[Q,Q]=0$, but in our particular case we also see that $B\in\Gamma(S^2\ker\rho)$ and therefore $\rho\circ B^\#=0$, as well as the dual sequence $0=B^{\#T}\circ \rho^T=B^\#\circ\rho^T$.
\end{proof}

\section{Regular cohomology}
The $H$-twisted Lie algebroid where the anchor has constant rank is also a Lie-2 algebroid in the sense of \cite{GSr10}.  The identifications are according to the names of the maps and $t\:\ker\rho\hookrightarrow E$ the embedding.  Therefore there is a realization of the structure functions as $Q$-structure on a graded manifold of degree 2.

\begin{prop}  Let $(E,\rho,[.,.],H)$ be an $H$-twisted Lie algebroid with anchor map $\rho$ of constant rank.  Then there is a nilpotent vector field $Q$ of degree 1 on the graded manifold $\M:=(\ker\rho)[2]\oplus E[1]$ with $l\:\Gamma(E)\xrightarrow{\sim}\X_{[-1]}(E[1])\subset\X_{[-1]}(\M)$ and $l'\:\Gamma(\ker\rho)\xrightarrow{\sim}\X_{[-2]}(\ker\rho[2])\cong\X_{[-2]}(\M)$ such that
\begin{align*}
  l[\psi_1,\psi_2] &= \pr_2[[l\psi_1,Q],l\psi_2],  \\
  \rho(\psi)[f] &= [l\psi,Q][f]\in\gsmooth_{[0]}(\M)=\smooth(M), \\
  l' H(\psi_1,\psi_2,\psi_3) &= [[[Q,l\psi_1],l\psi_2],l\psi_3] \in\X_{[-2]}(\M)=\Gamma(\ker\rho),  \\
  l' \Econn_\psi \phi &= [[Q,l\psi],l'\phi] \in\X_{[-2]}(\M)\quad\text{for }\phi\in\Gamma(\ker\rho)
\end{align*}
\end{prop}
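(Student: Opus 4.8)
The plan is to write down $Q$ explicitly from the structure data $(\rho,[.,.],H)$ together with the inclusion $t\:\ker\rho\hookrightarrow E$, to read the four displayed identities off this construction, and to match nilpotence $Q^2=\tfrac12[Q,Q]=0$ term by term with the axioms of Definition~\ref{d:HLie}. Since $\rho$ has constant rank, $\ker\rho$ is a genuine vector bundle and $\gsmooth(\M)=\Gamma(\wedge^\bullet E^*\otimes S^\bullet(\ker\rho)^*)$ is the free graded-commutative $\smooth(M)$-algebra on the degree-$1$ module $\Gamma(E^*)$ and the degree-$2$ module $\Gamma((\ker\rho)^*)$. The maps $l$ and $l'$ are the tautological identifications of $\psi\in\Gamma(E)$, resp.\ $\phi\in\Gamma(\ker\rho)$, with the fibrewise constant vertical vector field on $E[1]$, resp.\ on $(\ker\rho)[2]$; counting degrees gives $\X_{[-1]}(E[1])=\Gamma(E)$, $\X_{[-2]}(\M)=\X_{[-2]}(\ker\rho[2])=\Gamma(\ker\rho)$ and $\X_{[-1]}(\M)=\Gamma(E)\oplus\Gamma(E^*\otimes\ker\rho)$, so that $\pr_2$ denotes the projection of the last sum onto its first summand.

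Define $Q$ to be the degree-$1$ derivation of $\gsmooth(\M)$ which on the three kinds of generators is, up to overall signs fixed below,
\begin{align*}
  Qf &= \rho^T\ud f, & f&\in\smooth(M),\\
  Q\alpha &= \uD\alpha + t^T\alpha \in \Gamma(\wedge^2E^*)\oplus\Gamma((\ker\rho)^*), & \alpha&\in\Gamma(E^*),\\
  Q\beta &= \nabla\beta + \<\beta,H\> \in \Gamma(E^*\otimes(\ker\rho)^*)\oplus\Gamma(\wedge^3E^*), & \beta&\in\Gamma((\ker\rho)^*),
\end{align*}
where $\rho^T$ and $t^T$ are the transposes of $\rho$ and $t$, $\uD$ on $\Eforms[\bullet]$ is the exterior derivative built from $\rho$ and $[.,.]$ (the one appearing in Proposition~\ref{p:curv}, well defined for anchored almost Lie algebroids by \eqref{Leibniz} and Lemma~\ref{l:rho}), $\nabla$ is the exterior covariant derivative on $(\ker\rho)^*$-valued $E$-forms dual to the $E$-connection of Lemma~\ref{l:kerrhoConn}, and $\<\beta,H\>(\psi_0,\psi_1,\psi_2):=\<\beta,H(\psi_0,\psi_1,\psi_2)\>$. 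On each generating module $Q$ satisfies the Leibniz rule over $\smooth(M)$ (for instance $\uD(f\alpha)=(\rho^T\ud f)\wedge\alpha+f\,\uD\alpha$), so it extends uniquely to a globally defined odd vector field of degree $1$; in a local frame it is the expected sum of an anchor term $\rho^i_a\xi^a\partial_{x^i}$, a term $(t^a_\mu b_\mu-\tfrac12 C^a_{bc}\xi^b\xi^c)\partial_{\xi^a}$ encoding the inclusion and the bracket, and a term built from the connection coefficients and the components of $H$ acting on the $b_\mu$.

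For the four identities one evaluates iterated graded commutators directly. The bracket $[l\psi,Q]$ has $\partial_x$-part $\rho(\psi)$, whence $\rho(\psi)[f]=[l\psi,Q][f]$; bracketing once more with $l\psi'$ and keeping the $\partial_\xi$-part reproduces the structure functions of $[\psi,\psi']$, i.e.\ $\pr_2[[l\psi,Q],l\psi']=l[\psi,\psi']$; the residual $\partial_b$-part of $[[l\psi,Q],l\psi']$ is the pairing against $H(\psi,\psi',\,\cdot\,)$, so a third commutator with $l\psi''$ gives $l'H(\psi,\psi',\psi'')$; and $[[Q,l\psi],l'\phi]$ isolates the connection term of $Q$, giving $l'\Econn_\psi\phi$ (recall $\Econn_\phi\psi=[\phi,\psi]$ by Lemma~\ref{l:kerrhoConn}). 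All four then hold by inspection; the only freedom is the overall sign of the $H$-term in $Q$, pinned down by the third identity, and the choice of ordering $[l\psi,Q]$ versus $[Q,l\psi]$, which is immaterial since the graded bracket of two odd vector fields is symmetric.

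It remains to prove $Q^2=0$. As $Q$ is odd of degree $1$, $Q^2$ is a degree-$2$ derivation, so it suffices that it annihilate the three kinds of generators; by $\smooth(M)$-linearity each such equation splits further by tensor type, and each component is exactly one of the axioms. Concretely, $Q^2f=0$ reduces to $\uD^2f=0$, i.e.\ Lemma~\ref{l:rho} ($\rho$ is a bracket morphism), together with the tautology $\rho\circ t=0$; $Q^2\alpha=0$ reduces to $\uD^2\alpha=-(t^T\alpha)\circ H$, which is the $H$-twisted Jacobi identity \eqref{Jacobi} in the form \eqref{n=3} and is the identity $\uD^2\alpha=\Ht(\alpha)$ of Proposition~\ref{p:curv}, together with the tautology that $t$ intertwines $\Enabla$ with $[.,.]$; and $Q^2\beta=0$ reduces to the top component $\<\beta,\uD H\>=0$, i.e.\ closedness \eqref{Hclosed}, the mixed component, i.e.\ the curvature identity \eqref{D^2phi} of Proposition~\ref{p:curv} (relation \eqref{n=3b}), and an $S^2(\ker\rho)^*$-component, which is relation \eqref{n=2b} and holds automatically by skew-symmetry of $[.,.]$. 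Collecting these yields $Q^2=0$; the whole proposition is the constant-rank specialisation of the Lie 2-algebroid / degree-2 homological-graded-manifold correspondence of \cite{GSr10}. The only genuine effort is sign and combinatorial bookkeeping: tracking the signs generated by the odd derivation $Q$ on products of odd generators and by the iterated graded commutators of the odd fields $l\psi$ and $Q$, so that the four identities come out with exactly the orderings and signs stated and so that $Q^2\beta=0$ reproduces $\uD H=0$ on the nose and not merely up to a nonzero scalar; conceptually nothing is needed beyond the two-term $L_\infty$-picture of Section~3 and Proposition~\ref{p:curv}, constant rank serving only to guarantee that $\ker\rho$ is a bundle, so that $\M$ and the generators $\Gamma((\ker\rho)^*)$ make sense.
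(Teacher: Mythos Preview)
Your argument is correct and follows the same route as the paper: write down $Q$ explicitly and check that $Q^2=0$ is equivalent, component by component, to the axioms of Definition~\ref{d:HLie}. The paper's own proof is much terser --- it defers to \cite{GSr10} and records only the coordinate expression of $Q$ --- whereas you supply the coordinate-free definition of $Q$ on generators, verify the four derived-bracket identities, and spell out which axiom each homogeneous piece of $Q^2$ encodes; this is all implicit in the paper's one-line statement that ``nilpotence of $Q$ is equivalent to the axioms of an $H$-twisted Lie algebroid''.
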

\begin{proof}  This is a corollary of Proposition~3.1 in \cite{GSr10}.  The coordinate description of the Q-structure is
\begin{align*} Q =& \rho_i^a(x)\xi^a\pfrac{}{x^i} -\tfrac12C_{ab}^c(x)\xi^a\xi^b\pfrac{}{\xi^c} +t^a_B(x)b^B\pfrac{}{\xi_a} -\Gamma_{aB}^C\xi^a b^B\pfrac{}{b^C} \\&+\tfrac16H_{abc}^B\xi^a\xi^b\xi^c\pfrac{}{b^B}
\end{align*}  where $\Gamma_{aB}^C$ are the connection coefficients of $\Enabla$, $C_{ab}^c$ the structure functions of the bracket, $x^i$ coordinates on $M$, $\xi^a$ fiber-coordinates on $E[1]$, and $b^B$ fiber-coordinates on $(\ker\rho)[2]$.
 The nilpotence of $Q$ is equivalent to the axioms of an $H$-twisted Lie algebroid (for these choices of $\M$ and $\Enabla$ induced by $[.,.]$).
\end{proof}

Therefore we can define:
\begin{defn} The regular cohomology of an $H$-twisted Lie algebroid is the cohomology of the vetor fields $\X_{\bullet}(\M)$ under the differential $[Q,.]$.
\end{defn}


\begin{thebibliography}{AKSZ97}
\providecommand{\url}[1]{{\def~{{\textasciitilde}}\texttt{#1}}}
\providecommand{\urlprefix}{}
\expandafter\ifx\csname urlstyle\endcsname\relax
  \providecommand{\doi}[1]{doi:\discretionary{}{}{}#1}\else
  \providecommand{\doi}{doi:\discretionary{}{}{}\begingroup
  \urlstyle{rm}\Url}\fi
\providecommand{\eprint}[2][]{\texttt{#2}}

\bibitem[AKSZ97]{AKSZ97}
\textsc{M.~Alexandrov}, \textsc{M.~Kontsevich}, \textsc{A.~Schwartz}, and
  \textsc{O.~Zaboronsky}: \emph{The geometry of the master equation and
  topological quantum field theory}, Int. J. Mod. Phys., \textbf{vol. A12},
  (1997) 1405--1430. \eprint{hep-th/9502010}.

\bibitem[BC04]{Baez:03vi}
\textsc{J.~C. Baez} and \textsc{A.~S. Crans}: \emph{Higher-dimensional algebra.
  {VI}. {L}ie 2-algebras}, Theory Appl. Categ., \textbf{vol.~12}, (2004)
  492--538, ISSN 1201-561X. \eprint{math.QA/0307263}.

\bibitem[CL07]{CL07}
\textsc{Z.~Chen} and \textsc{Z.-J. Liu}: \emph{On (co-)morphisms of {L}ie
  pseudoalgebras and groupoids}, J. Algebra, \textbf{vol. 316(1)}, (2007)
  1--31, ISSN 0021-8693, \doi{10.1016/j.jalgebra.2007.07.001}.

\bibitem[CSX10]{CSX09b}
\textsc{Z.~Chen}, \textsc{M.~Sti{\'e}non}, and \textsc{P.~Xu}: \emph{Geometry
  of {M}aurer--{C}artan elements on complex manifolds}, Commun. in Math. Phys.,
  \textbf{vol. 297}, (2010) 169--187, ISSN 0010-3616,
  \doi{10.1007/s00220-010-1029-4}.

\bibitem[Gr{\"u}09]{Gru09}
\textsc{M.~Gr{\"u}tzmann}: \emph{{C}ourant algebroids: Cohomology and matched
  pairs}, Ph.D. thesis, Pennsylvania State University (2009).
  \eprint{math.DG/1004.1487}.

\bibitem[GS10]{GSr10}
\textsc{M.~Gr{\"u}tzmann} and \textsc{T.~Strobl}: \emph{General {Y}ang-{M}ills
  type gauge theories for $p$-form gauge fields -- a physics approach}. Work in
  progress, \eprint{hep-th/10...}

\bibitem[IU10]{Ikeda10b}
\textsc{N.~Ikeda} and \textsc{K.~Uchino}: \emph{{QP}-structures with degree 3
  and 4d topological field theory}. \eprint{hep-th:1004.0601}.

\bibitem[Kos96]{YKS96b}
\textsc{Y.~Kosman{n-S}chwarzbach}: \emph{The {L}ie bialgebroid of a
  {P}oisson-{N}ijenhuis manifold}, Lett. Math. Phys., \textbf{vol.~38(4)},
  (1996) 421--428, ISSN 0377-9017, \doi{10.1007/BF01815524}.

\bibitem[Kos05]{YKS03}
--- \emph{Quasi, twisted, and all that{$\ldots$}in {P}oisson geometry and {L}ie
  algebroid theory}, in \emph{The breadth of symplectic and {P}oisson
  geometry}, vol. 232 of \emph{Progr. Math.}, pp. 363--389 (Birkh\"auser
  Boston, Boston, MA, 2005). \eprint{math-sg/0310359}.

\bibitem[LWX97]{Xu97}
\textsc{Z.-J. Liu}, \textsc{A.~Weinstein}, and \textsc{P.~Xu}: \emph{{M}anin
  triples for {L}ie bialgebroids}, J. Diff.\ Geom, \textbf{vol. 45/3}, (1997)
  547--574. \eprint{math.DG/9508013}.

\bibitem[Rin63]{Rin63}
\textsc{G.~S. Rinehart}: \emph{Differential forms on general commutative
  algebras}, Trans. Amer. Math. Soc., \textbf{vol. 108}, (1963) 195--222, ISSN
  0002-9947, \doi{10.2307/1993603}.

\bibitem[Roy01]{Royt02}
\textsc{D.~Roytenberg}: \emph{On the structure of graded symplectic
  supermanifolds and {C}ourant algebroids}, in \emph{Workshop on Quantization,
  Deformations, and New Homological and Categorical Methods in Mathematical
  Physics}, vol. 315 of \emph{Contemp. Math.}, pp. 169--185 (2001).
  \eprint{math.SG/0203110}.

\bibitem[{\v{S}}ev05]{Sev01b}
\textsc{P.~{\v{S}}evera}: \emph{Some title containing the words ``homotopy''
  and ``symplectic'', e.g. this one}, in \emph{Travaux math\'ematiques. Fasc.
  XVI}, Trav. Math., XVI, pp. 121--137 (Univ. Luxemb., Luxembourg, 2005).
  \eprint{math.SG/0105080}.

\bibitem[SX08]{SX07}
\textsc{M.~Sti\'enon} and \textsc{P.~Xu}: \emph{Modular classes of {L}oday
  algebroids}, C. R. Acad. Sci. Paris, \textbf{vol. Ser. I 346}, (2008)
  193--198. \eprint{math.DG/0803-2047}.

\bibitem[Var84]{Vara}
\textsc{V.~Varadarajan}: \emph{{L}ie groups, {L}ie algebras, and their
  representations}, Graduate texts in mathematics (Springer, 1984).

\bibitem[Vor02]{Vor01}
\textsc{T.~Voronov}: \emph{Graded manifolds and {D}rinfeld doubles for {L}ie
  bialgebroids}, Quantization, {P}oisson Brackets and Beyond, Contemp. Math,
  \textbf{vol. 315}. \eprint{math.DG/0105237}.

\bibitem[web]{planetmath}
\textsc{written by~the web}: \emph{PlanetMath the online math encyclopedia}.
  \urlprefix\url{http://planetmath.org/}.

\end{thebibliography}

\end{document}